\newcommand{\dd}{\textrm{d}}
\newcommand{\RR}{\mathbb{R}}
\newcommand\restr[2]{{% we make the whole thing an ordinary symbol
  \left.\kern-\nulldelimiterspace % automatically resize the bar with \right
  #1 % the function
  \vphantom{\big|} % pretend it's a little taller at normal size
  \right|_{#2} % this is the delimiter
  }}
\newtheorem{thm}{Theorem}[section]
\newtheorem{defn}[thm]{Definition}
\newtheorem{lem}[thm]{Lemma}
\newtheorem{pro}[thm]{Proposition}
\theoremstyle{remark}
\newtheorem*{rem}{Remark}
\author{Antonio Cicone}
\address{Dipartimento di Scienza e Alta Tecnologia, Universit\`a degli Studi dell'Insubria, via Valleggio 11, Como 22100, Italy}
\email{antonio.cicone@univaq.it}
\author{Hau-Tieng Wu}
\address{Department of Mathematics and Department of Statistical Science, Duke University, Durham, NC, USA. Mathematics Division, National Center for Theoretical Sciences, Taipei, Taiwan}
\email{hauwu@math.duke.edu}
\title{Convergence analysis of Adaptive Locally Iterative Filtering and SIFT method}
\begin{document}

\maketitle

\begin{abstract}

Adaptive Local Iterative Filtering (ALIF) is a currently proposed novel time-frequency analysis tool. It has been empirically shown that ALIF is able to separate components and overcome the mode-mixing problem. However, so far its convergence is still an open problem, particularly for highly nonstationary signals, due to the fact that the kernel associated with ALIF is non-translational invariant, non-convolutional and non-symmetric. Our first contribution in this work is providing a convergence analysis of ALIF. From the practical perspective, ALIF depends on a robust frequencies estimator, based on which the decomposition can be achieved.
Our second contribution is proposing a robust and adaptive decomposition method for noisy and nonstationary signals, which we coined the Synchrosqueezing Iterative Filtering Technique (SIFT). In SIFT, we apply the synchrosqueezing transform to estimate the instantaneous frequency, and then apply the ALIF to decompose a signal. We show numerically the ability of this new approach in handling highly nonstationary signals.
\newline\newline
\textbf{Keyboard}: Adaptive locally iterative filtering (ALIF), Synchrosqueezing Iterative Filtering Technique (SIFT), Synchrosqueezing transform, instantaneous frequency, nonstationary signal decomposition, time-frequency analysis
\end{abstract}

\section{Introduction}

In the past decades, the study of reliable time--frequency analysis methods for feature extraction from multi--component nonstationary signals has been a hot research topic. It is motivated by analyzing various challenging signals from various scientific fields.
For instance, a long-standing open problem in engineering regards the analysis and mitigation of multipath--corrupted measurements for the differential Global Navigation Satellite System (like GPS, Galileo, etc.) for vehicle navigation \cite{hofmann2007gnss}. In Medicine, one challenging problem is an automatic assessment of patients' general physical health from physiological time series like, for instance, the peripheral venous pressure \cite{wardhan2009peripheral} or the PPG signal \cite{cicone2017nonlinear}. In Physics, one important problem is identifying gravitational--wave signals of astrophysical origin in the gravitational-wave experiments in the Advanced LIGO--Virgo collaboration \cite{nakano2019comparison}. In Space Physics, there are several important open problems related to understanding the nature of plasma turbulence energy cascade, the solar wind origin, the interaction of the Sun with the inner heliosphere and, more in general, space weather \cite{papini2020multidimensional,camporeale2018coherent,materassi2019stepping,cicone2017Geophysics,spogli2019role}.
Among these far-from-exhaustive list of signals, one common challenge researchers face is that those signals are usually corrupted by large noise, and the signal is usually composed of various chirps, whistles, or multipaths, i.e. rapid changes in frequencies over time. To be more precise, in many practical applications, the signals under study contain several components with time-varying characteristics, for example, time-varying frequency, time-varying amplitude, and so son. Usually, these characteristics encode important system information under study, so researchers would like to capture these characteristics as accurately as possible.

The study of time--frequency (TF) analysis and signal decomposition is a long lasting line of research in signal processing. It has led over the decades to the development of many useful algorithms and approaches \cite{flandrin1998time,reviewWu}.
Most TF analysis algorithms belong to either linear or quadratic methods \cite{flandrin1998time}. In linear methods, the signal is studied via inner products with (or correlations with) a pre-assigned basis. The most well known methods are the short-time (or windowed) Fourier transform \cite{flandrin1998time} and the wavelet transform \cite{daubechies1992ten}. Among these methods, the chosen basis unavoidably leaves its peculiar footprint in the representation, which can badly affect the interpretation of the TF representation (TFR) when looking for properties of the signal. Moreover, the Heisenberg uncertainty principle limits the maximally achievable resolution of the TFR \cite{flandrin1998time}. While there is a trade-off between different choices of transforms or bases, it is challenging to find an ideal one \cite{flandrin1998time}.
In the quadratic approach, one may avoid introducing a basis and achieve a crisper and more focused TFR. However, as a side effect, reading the TFR of a multi-component signal becomes more complicated by the presence of interference terms \cite{flandrin1998time}. These interferences lead in most cases to even non-positivity in some parts of the TFR, which challenges this interpretation \cite{flandrin1998time}. This last flaw can be removed by some post-processing via kernel smoothing, which leads to the Cohen's class or Affine class \cite{cohen1995time,flandrin1998time}. However, this kernel smoothing approach again introduces unavoidably blurring in the resulting TFR. Furthermore, the extraction of a signal, or part of it, is much less straightforward for quadratic approaches when compared with linear approaches.

Motivated by these limitations, in the past decades, several nonlinear-type TF analyses have been proposed, for example, the reassignment method \cite{auger1995improving}, empirical mode decomposition (EMD) \cite{huang1998empirical}, the Blaschke decomposition \cite{Nahon:2000Thesis,coifman2017carrier}, the synchrosqueezing transform (SST) \cite{daubechies1996nonlinear,daubechies2011synchrosqueezed}, sparse time--frequency representation \cite{hou2011adaptive}, Iterative Filtering (IF) \cite{lin2009iterative}, Adaptive Locally Iterative Filtering (ALIF) \cite{cicone2016adaptive}, the empirical wavelet transform \cite{gilles2013empirical}, the variational mode decomposition \cite{dragomiretskiy2013variational}, the de-shape algorithm \cite{cicone2017nonlinear}, etc. We refer readers with interest to \cite{reviewWu} for a recent review in this direction. With these tools, we are able to produce a crisper and focused TFR to study a highly nonstationary multi-components signal and/or decompose components from a given signal. However, when the noise level is high, it is still a challenging problem, particularly when we come to signal decomposition problem.

In this article, we focus on the ALIF algorithm, a novel signal decomposition tool
inspired by the EMD \cite{huang1998empirical}. EMD is an iterative, local and adaptive data-driven method which has a ``divide et impera'' approach. The idea beyond EMD is simple, but powerful. We first iteratively divide the signal into several simple components via the ``sifting process'', and then each of them is analyzed separately in the TF domain via, for example, Hilbert transform \cite{huang1998empirical,huang2009instantaneous} or quadrature approach \cite{huang2009instantaneous}. While EMD seems to bypasses the Heisenberg-Gabor uncertainty principle \cite{flandrin1998time} and has been widely applied in practice, it contains several heuristics and ad hoc elements that make it hard to analyze mathematically.
Encouraged by the success of EMD, Iterative Filtering (IF) \cite{lin2009iterative} and its generalization, ALIF \cite{cicone2016adaptive}, were proposed as alternative iterative methods to capture the basic idea of EMD. IF and ALIF are mathematically tractable and have already been used effectively in a wide variety of applied fields, for instance, \cite{spogli2019role,materassi2019stepping,sfarra2019improving,cicone2017Geophysics,sharma2017automatic,li2018entropy,mitiche2018classification,yu2010modeling}.
The structure of the IF and ALIF algorithms resemble the EMD. The key difference is how the ``signal moving average'' is computed. In the IF and ALIF, it is the correlation of the signal itself with an a priori chosen filter function, instead of using the average between two envelopes. This apparently simple difference opens the doors to the mathematical analysis of IF and ALIF \cite{huang2009convergence,wang2013convergence,cicone2016adaptive,cicone2020study,cicone2020numerical,cicone2019spectral,cicone2020iterative,cicone2017multidimensional,cicone2019MFIF,cicone2020oneortwo,cicone2019nonstationary}.
The main challenge of IF is its limitation in extracting chirps from a signal. In fact, the algorithm was designed to extract, in a data driven fashion, simple components using only a narrow bandwidth of frequencies. For this reason, ALIF was developed as a flexible generalization of IF. ALIF overcomes this problem by allowing the frequency bandwidth to change locally and adaptively based on the signal \cite{cicone2016adaptive}.
This property makes the ALIF algorithm a promising and unique technique for decomposing multicomponent and highly nonstationary noisy signals.
However, there are at least two problems we need to address to let ALIF work properly. The first problem is how to theoretically guarantee its convergence \cite{cicone2019spectral}, like that for the IF method \cite{cicone2020numerical}. This challenge will be tackled in Section \ref{sec:results}.
Secondly, in its current formulation, it is not easy to directly identify which component should be extracted from a noisy signal, and we need a robust approach to estimate the frequency pattern before applying ALIF. This clearly limits considerably the ALIF usefulness in real life applications.
In this article, we propose to combine ALIF with SST, and equivalent methods, to extract the frequency pattern of the signal. This frequency pattern guides us in designing the adaptive filter in ALIF. By combining SST and ALIF, we produce an innovative algorithm, which we coined Synchrosqueezing Iterative Filtering Technique (SIFT), which will be elaborated in Section \ref{sec:SIFT}. We conclude this work with various numerical examples, Section \ref{sec:Examples}, followed by a conclusion section.

\section{Brief overview on the ALIF algorithm}

The ALIF method is an iterative procedure whose purpose is to decompose a signal $f$ into a finite number of ``simple components'' called
Intrinsic Mode Functions (IMFs). According to the qualitative description shown in \cite[Section~4]{huang1998empirical}, an IMF is a signal possessing the following two properties:
\begin{enumerate}
\item the number of extrema and the number of zero crossings must be either equal or differ at most by one; \item at any point, the mean value of the envelope connecting the local maxima and the envelope connecting the local minima must be zero.
\end{enumerate}

\begin{algorithm}
\caption{\textbf{(ALIF Algorithm)} $\textrm{IMFs = ALIF}(f)$}
\begin{algorithmic}
\STATE IMFs = $\left\{\right\}$
\STATE initialize the remaining signal $r=f$
\WHILE{the number of extrema of $r$ is $\geq 2$}
			\STATE for each $t\in\RR$ compute the kernel $\mathcal{K}_{\sigma}(t,\ x)$, whose support $\sigma$ depends on $t$ and is chosen based on $r$ itself
      \STATE $f_1=r$
      \STATE $m=1$
      \WHILE{the stopping criterion is not satisfied}
                  \STATE compute the moving average of $f_m$ as\\
                         $\mathcal{L}_{\sigma} f_m(t) := \int \mathcal{K}_{\sigma}(t,\ x) f_m(x) \dd x$
                  \STATE $f_{m+1}=f_m-\mathcal{L}_{\sigma} f_m(t)$
                  \STATE Increase $m$ by $1$
      \ENDWHILE
      \STATE Assign $\textrm{IMFs}\cup\{f_m\}$ to $\textrm{IMFs}$
      \STATE Assign $r-f_m$ to  $r$
\ENDWHILE
\STATE Assign $\textrm{IMFs}\cup\{r\}$ to $\textrm{IMFs}$
\end{algorithmic}
\label{alg:ALIF}
\end{algorithm}

Algorithm~\ref{alg:ALIF} shows the ALIF pseudocode. There, we introduce the kernel function $\mathcal{K}_{\sigma}(t,\ x)$ that represents a weighting function, where $\sigma$ is a continuous positive function depending on $t$. Here, $\sigma$ is chosen based on the signal under analysis such that $\inf_y \sigma(t)>0$. The kernel function allows us to compute the local moving average of the signal $f_m$ via
\[
\mathcal{L}_{\sigma} f_m:=\int \mathcal{K}_{\sigma}(t,\ x) f_m(x) \dd x.
\]
In the next section, we will present a typical kernel function and carry out the convergence analysis. The interested reader can find more details on the kernel function selection in \cite{cicone2016adaptive}.

The ALIF algorithm contains two loops. The inner loop captures a single IMF, and the outer loop produces all the IMFs constituting the given signal $f_0:=f$. Consider the first iteration of the ALIF outer loop. It extracts the first IMF from the signal $f_1=f$, the key idea is capturing the EMD by computing the moving average $\mathcal{L}_{\sigma} f_m$. Then it is subtracted from $f_m$ to capture its fluctuation part
$f_m-\mathcal{L}_\sigma f_m = f_{m+1}$. This procedure is repeated iteratively and, assuming convergence, the first IMF is obtained as $\textrm{IMF}_1=\lim_{m\to\infty} \left(I-\mathcal{L}_\sigma \right) f_m$. However, in practice, we cannot let $m$ go to $\infty$ and we have to use a stopping criterion, as pointed out in  Algorithm~\ref{alg:ALIF}. Assuming convergence, one can stop the inner loop at the first index $m$ such that the difference $f_{m+1}-f_m$ is small in some norm, or when a maximum number of iterations has been reached \cite{cicone2016adaptive}.

Once the first IMF is obtained we reapply the previous steps to the remaining signal $r=f-\textrm{IMF}_1$ to produce the second IMF. If we iterate this entire procedure we can obtain all the IMFs of $f$. The outer loop stops as soon as $r$ becomes a trend signal, i.e. it possesses at most one extremum. Clearly, the sum of all the IMFs of $f$ produced by the ALIF method, together with the final trend $r$, equals $f$.

We recall here that, if $\sigma(t)$, which is chosen at each iteration of the outer loop based on the remaining signal $r$, is selected to be a constant value $\sigma$ which does not depend on $t$, then the ALIF method boils down to the standard IF method, whose convergence and stability have been proved recently \cite{cicone2016adaptive,cicone2020numerical,cicone2020iterative}. Whereas it is still an open problem to prove the convergence of the ALIF method in its most general setting, i.e. when $\sigma$ does vary with $t$. We tackle this problem in the next section.

\section{ALIF convergence analysis}\label{sec:results}

We start this section with an alternative definition of ``simple function'' which is more rigorous than the qualitative IMF definition given in the previous section, and is at the foundation of the following analysis.

\begin{defn}[Intrinsic Mode Type (IMT) function \cite{daubechies2011synchrosqueezed}]
Fix $\varepsilon\geq0$. A function is called an $\varepsilon$-Intrinsic Mode Type (IMT) function if it satisfies
\[
f(t) = A (t)e^{i2\pi\phi(t)}\,,
\]
where $A$ and $\phi$ satisfy
\begin{itemize}
\item (Regularity Conditions) $A\in C^1(\RR)$, $\phi\in C^2(\RR)$;
\item (Boundedness Conditions) $A(t)>0$, $\phi'(t)>0$ for all $t\in\RR$;
\item (Growth Conditions)  $|A'(t)|\leq \varepsilon\phi'(t)$, $|\phi''(t)|\leq \varepsilon\phi'(t)$ for all $t\in\RR$, and $\sup_\RR \left|\phi''(t)\right|=M''<\infty$.
\end{itemize}
\end{defn}

We call $A(t)$ the {\em amplitude modulation} (AM), $\phi(t)$ the {\em phase function}, and $\phi'(t)$ the {\em instantaneous frequency} (iF) of the associated $\epsilon$-IMT function. Note that we use IF to denote the Iterative Filtering method and iF to denote the instantaneous frequency function. We emphasize that an IMT function is an IMF by the qualitative definition of IMF, while an IMF may not be an IMT function. Specifically, we can easily find a function $e^{i\phi(t)}$ so that $\phi''(t)> C\phi'(t)>0$ for a large $C>0$; that is, the ``frequency'' of a signal changes rapidly from time to time.  Also, unlike the IMF, the AM, phase and iF are all well-defined quantities without the identifiability issue. Indeed, if $f(t) = A (t)e^{i2\pi\phi(t)}=a(t)e^{i2\pi\varphi}$, then by taking absolute value we know that $A(t)=a(t)$ for all $t\in \mathbb{R}$. Then, after canceling $A(t)$, we know $\phi(t)=\varphi(t)$ up to a global phase $2K\pi$ for any $k\in \mathbb{K}$, and hence $\phi'(t)=\varphi'(t)$ for all $t\in \mathbb{R}$. We mention that in practice the signal is usually saved in the real form, that is, $f(t) = A (t)\cos(2\pi\phi(t))$. In this case, the identifiability issue is more complicated and has been resolved in \cite{Chen_Cheng_Wu:2014}. The relationship between $f(t) = A (t)e^{i2\pi\phi(t)}$ and $f(t) = A (t)\cos(2\pi\phi(t))$ is more complicated and delicate than it looks like. It is sometimes called the {\em Vakman's problem} \cite{Bedrosian:1962,Nuttall:1966,Picinbono:1997,Huang_Wang_Yang:2013}, and its discussion is out of the scope of this paper. We refer readers with interest to those cited papers. To simplify the discussion and focus on the ALIF algorithm, we stick to our IMT function setup.

\begin{defn}[Adaptive harmonic model]
Fix constants $ \varepsilon\geq 0$ and $d>0$. A function satisfies the $(\varepsilon,d)$-{\em adaptive harmonic model} (AHM) if it satisfies
\begin{equation}
f(t) = \sum_{\ell=1}^K f_\ell(t)\,,
\end{equation}
where $K\in \mathbb{N}$, $f_\ell(t)=A_\ell (t)e^{i2\pi\phi_\ell(t)}$ is an $\varepsilon$-IMT function, $\ell=1,\ \ldots,\ K$, and the following separation condition is satisfied:
\begin{itemize}\label{definition:adaptiveHarmonicMultiple}
\item (Separation Condition) $\phi_{\ell+1}'(t)-\phi'_\ell(t)\geq d$  for all $\ell=1,\ldots,K-1$, where $d>0$.
\end{itemize}
\end{defn}

We will study the convergence property of the ALIF on those functions satisfying the adaptive harmonic model.

\begin{defn}
Take a continuous positive function $\sigma:\mathbb{R}\to\mathbb{R}^+$ so that $\inf_t \sigma(t)>0$. Define
\begin{equation}\label{eq:L_operator}
    \mathcal{L}_{\sigma} f(y) := \int \mathcal{K}_{\sigma}(y,\ x) f(x) \dd x\,,
\end{equation}
where $\mathcal{K}_{\sigma}$ is the {\em Adaptive Local Iterative Filtering} kernel
\begin{eqnarray}
 \label{eq:K_ALIF} \mathcal{K}_{\sigma}(y,\ x) :=  \frac{1}{\sqrt{\pi}\sigma(y)}e^{-\frac{|y-x|^2}{\sigma(y)^2}}\,.
\end{eqnarray}
The {\em ALIF operator} associated with $\sigma$ and $K$ iterations is defined as
\[
\mathcal{S}^{(K)}_\sigma :=(I-\mathcal{L}_\sigma)^K\,,
\]
where $K\in \mathbb{N}$ and $I$ is the identity operator.
\end{defn}

We point out that, throughout the paper, to ease the notation and whenever there are no ambiguities, we drop the independent variable. For instance, in the previous formulas, $\sigma(y)$ is written as $\sigma$.

In ALIF, the kernel bandwidth depends on the location $y$. Due to the dependence on $y$, ALIF in general does not lead to the traditional bandpass filter.
Also, it is clear that the ALIF kernel \eqref{eq:K_ALIF} is in general asymmetric
\begin{equation}\label{eq:K_ALIF_nonsym}
    \mathcal{K}_{\sigma}({y},\ {x}) \neq \mathcal{K}_{\sigma}({x},\ {y})\,.
\end{equation}
We mention that this kind of non-translational invariant, non-convolutional, or non-symmetric kernel is also commonly encountered in other signal processing problems. For example, the kernel associated with the unsupervised dimension reduction algorithm {\em Locally Linear Embedding} (LLE) is also asymmetric, and is of the same kind of format \cite[Corollary 3.1]{Wu_Wu:2017}. Specifically, the kernel associated with LLE is determined by the data, and its bandwidth depends on the point. Moreover, the kernel morphology near the boundary is even different from that away from the boundary \cite{Wu_Wu:2019}. We refer the interested readers to \cite{Wu_Wu:2017,Wu_Wu:2019} for details.

\begin{rem}
If the kernel bandwidth is independent of $y$, that is, $\sigma(y)=c>0$ is constant, then the kernel is reduced to the {\em Iterative Filtering} kernel
\begin{eqnarray}
 \label{eq:K_IF} \mathcal{K}_{c}(y,\ x) = \frac{1}{\sqrt{\pi}c}e^{-\frac{|y-x|^2}{c^2}}\,,
\end{eqnarray}
with the bandwidth $c>0$ and the integral operator $\mathcal{L}_{\sigma}$ is reduced to the traditional convolutional kernel.
Unlike ALIF, the IF kernel \eqref{eq:K_IF} is always symmetric.
It is possible to consider a more general kernel setup; for example
\begin{eqnarray}
 \label{eq:K_gen} \mathcal{K}^{(\texttt{General})}_{\sigma}(y,\ x) :=  \frac{1}{\sqrt{\pi}\sigma(x,y)}e^{-\frac{|y-x|^2}{\sigma(x,y)^2}}\,.
\end{eqnarray}
In this setup, the kernel bandwidth depends not only on $y$, but also on $x$. In this paper, while the proof for the general kernel is a direct generalization of the proof in this paper, we focus on the ALIF kernel. An exploration of the general kernel setup, both in theoretical and numerical perspectives, will be reported in a future work.
\end{rem}

Intuitively, $\mathcal{S}^{(K)}_\sigma$ behaves like a high pass filter, while $\mathcal{L}_\sigma$ behaves like a low pass filter. However, it is clear that $\mathcal{S}_\sigma^{(K)}\neq I-\mathcal{L}_\sigma^k$, even if $\sigma$ is a constant function. To appreciate this difference, note that when $\sigma$ is a constant function, $\mathcal{S}^{(K)}_\sigma$ is reduced to a linear high pass filter and $\mathcal{L}_\sigma$ is reduced to a low pass filter. Under this assumption, by the cascade property of Gaussian kernel, we have
\[
I-\mathcal{L}_\sigma^K=I-\mathcal{L}_{K\sigma},
\]
while
\[
(I-\mathcal{L}_\sigma)^K=I+\sum_{l=1}^K(-1)^lc_l\mathcal{L}_{l\sigma}\,,
\]
where $c_l$ are the coefficients of the binomial expansion, which is clearly different from $I-\mathcal{L}_\sigma^K$.

We start with preparing some bounds.
\begin{lem}\label{Lemma Taylor expansion}
Fix $\epsilon\geq 0$. For an IMT function $f(t)=A(t)e^{i2\pi\phi(t)}$, we have
\begin{align}
\left|A(z)-A(y)\right|&\leq \varepsilon Q_y(z)\,\\
\left|\phi'(z)-\phi'(y)\right|&\leq \varepsilon Q_y(z)\,\nonumber
\end{align}
for any $y,z\in \mathbb{R}$, where
\[
Q_y(z):=\phi'(y) |z-y| + \frac{1}{2} M''(z-y)^2\,,
\]
and hence
\begin{align}\label{Freezing phase difference}
 \left|e^{i2\pi\phi(x)}-e^{i2\pi [\phi'(y)x-(\phi'(y)y-\phi(y))]}\right| \leq 2\pi \varepsilon \phi'(y)|y-x|^2\,.
\end{align}
Moreover, we have
\[
\left|e^{-|x-y|^2\phi'(z)^2}-e^{-|x-y|^2\phi'(y)^2} \right|\leq \varepsilon e^{-|x-y|^2\phi'(y)^2}|x-y|^2 \left[2\phi'(y) Q_y(z)+ \varepsilon Q_y^2(z)\right]\,.
\]

\end{lem}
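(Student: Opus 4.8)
The plan is to reduce all four bounds to repeated use of the fundamental theorem of calculus against the growth conditions $|A'|\le\varepsilon\phi'$ and $|\phi''|\le\varepsilon\phi'$, together with the elementary consequence $\phi'(s)\le\phi'(y)+M''|s-y|$ of $\sup_\RR|\phi''|=M''$. For the two first-order estimates I would write $A(z)-A(y)=\int_y^z A'(s)\,\dd s$ and $\phi'(z)-\phi'(y)=\int_y^z\phi''(s)\,\dd s$, so that the growth conditions give $|A(z)-A(y)|\le\varepsilon\int_y^z\phi'(s)\,\dd s$ and the analogous inequality for $\phi'$. Taking $z>y$ (the case $z<y$ being symmetric) and integrating $\phi'(s)\le\phi'(y)+M''(s-y)$ over $[y,z]$ yields precisely $\phi'(y)(z-y)+\tfrac12 M''(z-y)^2=Q_y(z)$, which delivers the first two inequalities simultaneously.

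For the third inequality I would exploit that $\theta\mapsto e^{i\theta}$ is $1$-Lipschitz on $\RR$, i.e. $|e^{ia}-e^{ib}|\le|a-b|$. Recognizing the second exponent as the first-order Taylor polynomial $L(x)=\phi(y)+\phi'(y)(x-y)$, the claim reduces to bounding $2\pi|\phi(x)-L(x)|$. Writing the remainder as $\phi(x)-L(x)=\int_y^x\bigl(\phi'(s)-\phi'(y)\bigr)\,\dd s$ and inserting the second inequality leaves a single integration. To leading order this produces the stated $2\pi\varepsilon\phi'(y)|x-y|^2$; the $Q_y$-remainder contributes only terms of higher order in $|x-y|$, which must be absorbed in the regime where the estimate is actually applied (on the kernel scale), consistent with the local nature of the bound.

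For the fourth inequality I would factor out the reference Gaussian,
\[
e^{-|x-y|^2\phi'(z)^2}-e^{-|x-y|^2\phi'(y)^2}=e^{-|x-y|^2\phi'(y)^2}\bigl(e^{-w}-1\bigr),\qquad w:=|x-y|^2\bigl(\phi'(z)^2-\phi'(y)^2\bigr).
\]
The exact shape of the target bound, namely the bracket $2\phi'(y)Q_y(z)+\varepsilon Q_y^2(z)$, is generated by the difference-of-squares $\phi'(z)^2-\phi'(y)^2=(\phi'(z)-\phi'(y))(\phi'(z)+\phi'(y))$: the second inequality controls the first factor by $\varepsilon Q_y(z)$, while positivity of $\phi'$ and that same inequality give $\phi'(z)+\phi'(y)\le 2\phi'(y)+\varepsilon Q_y(z)$. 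This yields $|w|\le\varepsilon|x-y|^2\bigl[2\phi'(y)Q_y(z)+\varepsilon Q_y^2(z)\bigr]$, and the claim follows once $|e^{-w}-1|\le|w|$ is invoked.

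The hard part, as I expect it, is exactly that last step: the elementary estimate $|e^{-w}-1|\le|w|$ holds only when $w\ge0$, i.e. when $\phi'(z)\ge\phi'(y)$; for $w<0$ one has $|e^{-w}-1|=e^{|w|}-1\ge|w|$, so the clean factored bound carrying the reference weight $e^{-|x-y|^2\phi'(y)^2}$ genuinely requires justification. The way through is to observe that the lemma is used with $|x-y|$ on the kernel scale $\sigma(y)$ and with $\varepsilon$ small, so that $|w|$ stays bounded and the exponential estimate remains valid; alternatively one re-derives the bound symmetrically by a mean value argument on $u\mapsto e^{-|x-y|^2u^2}$. The first three inequalities, by contrast, are routine once $\phi'(s)\le\phi'(y)+M''|s-y|$ is in hand.
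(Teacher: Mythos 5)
Your approach coincides with the paper's own. For the first two estimates and the phase bound \eqref{Freezing phase difference} the paper offers no argument at all (it defers to an external ``Estimate 3.4'' whose citation is in fact empty), so your fundamental-theorem-of-calculus derivation is the natural filling-in; for the Gaussian difference the paper does exactly what you do: write $\phi'(z)=\phi'(y)+\int_y^z\phi''(\xi)\,\textrm{d}\xi$, expand the square (your difference-of-squares factorization is the same algebra, since $2\phi'(y)\int_y^z\phi''+(\int_y^z\phi'')^2=\phi'(z)^2-\phi'(y)^2$), factor out $e^{-|x-y|^2\phi'(y)^2}$, and invoke $|e^{-w}-1|\leq w$ for $w\geq 0$.

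The issue you flag at the end is genuine, and it is a gap in the paper's proof as well, not only in yours: the paper applies $|e^{-w}-1|\leq w$ without checking that the exponent is nonnegative. When $\phi'(z)<\phi'(y)$ the exponent $w=|x-y|^2\bigl(\phi'(z)^2-\phi'(y)^2\bigr)$ is negative, $|e^{-w}-1|=e^{|w|}-1$, and the stated bound can actually fail: fixing $y,z$ with $\phi'(z)<\phi'(y)$ and letting $t=|x-y|^2\to\infty$, the left-hand side equals $e^{-t\phi'(y)^2}\bigl(e^{t(\phi'(y)^2-\phi'(z)^2)}-1\bigr)$, which is exponentially large in $t$ relative to the weight $e^{-t\phi'(y)^2}$, while the right-hand side is only linear in $t$ relative to that same weight. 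So the inequality as stated cannot hold for all $x,y,z$; it requires either restricting $|x-y|$ to the kernel scale (so that $|w|$ stays bounded, as you suggest, which is the regime where the lemma is actually used inside $\mathcal{L}_{\sigma_y}$) or weakening the reference weight to $e^{-|x-y|^2\min(\phi'(y),\phi'(z))^2}$. Your proposal is thus at least as rigorous as the paper's, and correctly isolates the step that would need repair. A similar caveat applies to your third estimate: the honest bound is $\pi\varepsilon\phi'(y)|x-y|^2+\tfrac{\pi}{3}\varepsilon M''|x-y|^3$, which exceeds the stated $2\pi\varepsilon\phi'(y)|x-y|^2$ once $|x-y|>3\phi'(y)/M''$; the paper sidesteps this entirely by omitting the proof of that part.
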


\begin{proof}
The proof of the first part is based on Taylor's expansion, and can be found in \cite[Estimate 3.4]{}, so we skip it.
The second part is also based on Taylor's expansion.
\begin{align*}
  e^{-|x-y|^2\phi'(z)^2} &\, = e^{-|x-y|^2\left(\phi'(y)+\int_y^z \phi''(\xi)\dd \xi\right)^2}\\
  &\, = e^{-|x-y|^2\left[\phi'(y)^2+2\phi'(y)\int_y^z \phi''(\xi)\dd \xi+ \left(\int_y^z \phi''(\xi)\dd \xi\right)^2\right]}
\end{align*}
which implies that
\begin{align*}
% \nonumber to remove numbering (before each equation)
 & e^{-|x-y|^2\phi'(z)^2}-e^{-|x-y|^2\phi'(y)^2}\\
  =&\, e^{-|x-y|^2\phi'(y)^2} \left(e^{-|x-y|^2\left[2\phi'(y)\int_y^z \phi''(\xi)\dd \xi+ \left(\int_y^z \phi''(\xi)\dd \xi\right)^2\right]}-1\right)\,.
\end{align*}
Hence, since $\left|e^{-x}-1\right|\leq x$ for all $x\geq 0$, it follows that
\begin{align*}
% \nonumber to remove numbering (before each equation)
  &\left|e^{-|x-y|^2\phi'(z)^2}-e^{-|x-y|^2\phi'(y)^2} \right| \\
  \leq \,& e^{-|x-y|^2\phi'(y)^2} |x-y|^2\left|2\phi'(y)\int_y^z \phi''(\xi)\dd \xi+ \left(\int_y^z \phi''(\xi)\dd \xi\right)^2\right| \\
   \leq \,& e^{-|x-y|^2\phi'(y)^2} |x-y|^2\left[2\phi'(y)\left|\int_y^z \phi''(\xi)\dd \xi\right|+ \left|\int_y^z \phi''(\xi)\dd \xi\right|^2\right] \\
   \leq \,& e^{-|x-y|^2\phi'(y)^2}|x-y|^2 \left[2\phi'(y)\varepsilon Q_y(z)+ \varepsilon^2 Q_y^2(z)\right]\,,
\end{align*}
where the last bound comes from the first statement and the fact that $\phi'(z)-\phi'(y)=\int_y^z\phi''(\xi)d\xi$.
\end{proof}

Next, we consider the following ``freezing procedure''. Take an IMT function $f(x)=A(x)e^{i2\pi\phi(x)}$. We freeze $f$ at fixed $y$ by considering a harmonic function
\begin{equation}
f_y(x) := A_y e^{i2\pi [\phi'_yx-(\phi'_yy-\phi_y)]}\,,
\end{equation}
where $x\in \mathbb{R}$, $A_y:=A(y)$, $\phi_y:=\phi(y)$ and $\phi'_y := \phi'(y)$. Note that $f_y$ is a harmonic function with the amplitude $A_y$, frequency $\phi'_y$ and global phase $-\phi'_yy+\phi_y$ coming from the IMT function $f(x)$ at $y$. To simplify the notation and avoid confusion, we use the subscript ``${}_y$'' to emphasize that the associated quantity is a constant and does not vary from time to time.

\begin{pro}\label{prop:1}
Take an IMT function $f(t)=A (t)e^{i2\pi\phi(t)}$. Assume that $\varphi\in C^2(\mathbb{R})$ and also satisfies the phase condition of an IMT function. Then, we have
$$
(I-\mathcal{L}_{1/\varphi'})f(y) =\left[1- e^{-\pi^2\big(\frac{\phi'(y)}{\varphi'(y)}\big)^2}\right]f(y)+ E(y)\,,
$$
where $E(y)$ is defined in \eqref{Definition E function} and is bounded by $\varepsilon H(y)$ defined in \eqref{Defintion H function}.
\end{pro}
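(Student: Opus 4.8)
The plan is to use the \emph{freezing procedure} introduced just above the statement: replace $f$ near the evaluation point $y$ by the harmonic $f_y$, evaluate the operator on $f_y$ \emph{exactly}, and collect the discrepancy into $E(y)$. The hypothesis that $\varphi$ satisfies the phase condition of an IMT function guarantees $\varphi'(y)>0$, so that $\sigma=1/\varphi'$ is a legitimate (positive, bounded below) bandwidth and $\mathcal{L}_{1/\varphi'}$ is well defined. Since the kernel bandwidth at the evaluation point is $\sigma(y)=1/\varphi'(y)$, I would first write out
\[
\mathcal{L}_{1/\varphi'}f(y) = \frac{\varphi'(y)}{\sqrt{\pi}}\int e^{-|y-x|^2\varphi'(y)^2}f(x)\,\dd x\,,
\]
split $f(x)=f_y(x)+\bigl[f(x)-f_y(x)\bigr]$, and define the error term
\begin{equation}\label{Definition E function}
E(y) := \frac{\varphi'(y)}{\sqrt{\pi}}\int e^{-|y-x|^2\varphi'(y)^2}\bigl[f_y(x)-f(x)\bigr]\,\dd x\,,
\end{equation}
so that $(I-\mathcal{L}_{1/\varphi'})f(y)=f(y)-\mathcal{L}_{1/\varphi'}f_y(y)+E(y)$ by construction.

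The core computation is then the exact evaluation of the frozen term $\mathcal{L}_{1/\varphi'}f_y(y)$. Writing $f_y(x)=A_y e^{i2\pi\phi_y}\,e^{i2\pi\phi'_y(x-y)}$ and substituting $u=x-y$, the $x$-independent prefactor recombines as $A_y e^{i2\pi\phi_y}=A(y)e^{i2\pi\phi(y)}=f(y)$, and what remains is the Gaussian integral $\frac{\varphi'(y)}{\sqrt{\pi}}\int e^{-u^2\varphi'(y)^2}e^{i2\pi\phi'_y u}\,\dd u$. Completing the square in the complex exponent, i.e.\ the identity $\int e^{-au^2+ibu}\,\dd u=\sqrt{\pi/a}\,e^{-b^2/(4a)}$ with $a=\varphi'(y)^2$ and $b=2\pi\phi'(y)$, gives precisely $e^{-\pi^2(\phi'(y)/\varphi'(y))^2}$. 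Hence $\mathcal{L}_{1/\varphi'}f_y(y)=e^{-\pi^2(\phi'(y)/\varphi'(y))^2}f(y)$, which together with the decomposition above yields the claimed leading factor $\bigl[1-e^{-\pi^2(\phi'(y)/\varphi'(y))^2}\bigr]$ multiplying $f(y)$.

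Finally I would bound $E(y)$. By the triangle inequality, adding and subtracting $A(y)e^{i2\pi\phi(x)}$ separates the amplitude deviation from the phase deviation:
\[
\left|f(x)-f_y(x)\right|\leq \left|A(x)-A(y)\right| + A(y)\left|e^{i2\pi\phi(x)}-e^{i2\pi[\phi'(y)x-(\phi'(y)y-\phi(y))]}\right|\,.
\]
Lemma~\ref{Lemma Taylor expansion} bounds the first term by $\varepsilon Q_y(x)$, and \eqref{Freezing phase difference} bounds the second by $2\pi\varepsilon A(y)\phi'(y)|y-x|^2$. Substituting these into \eqref{Definition E function} and factoring out $\varepsilon$ gives $|E(y)|\leq\varepsilon H(y)$ with
\begin{equation}\label{Defintion H function}
H(y) := \frac{\varphi'(y)}{\sqrt{\pi}}\int e^{-|y-x|^2\varphi'(y)^2}\Bigl[Q_y(x)+2\pi A(y)\phi'(y)|y-x|^2\Bigr]\,\dd x\,,
\end{equation}
which is evaluated in closed form by the absolute Gaussian moments $\frac{\varphi'(y)}{\sqrt{\pi}}\int e^{-u^2\varphi'(y)^2}|u|\,\dd u=\frac{1}{\sqrt{\pi}\varphi'(y)}$ and $\frac{\varphi'(y)}{\sqrt{\pi}}\int e^{-u^2\varphi'(y)^2}u^2\,\dd u=\frac{1}{2\varphi'(y)^2}$ applied to $Q_y(x)=\phi'(y)|x-y|+\tfrac12 M''(x-y)^2$.

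I expect the main obstacle to be the exact Gaussian evaluation with the complex exponent: one must justify the oscillatory integral (complete the square off the real axis, or invoke the Fourier transform of the Gaussian) and, crucially, track the global phase $-(\phi'_y y-\phi_y)$ carefully so the prefactor reassembles into $f(y)$ with no spurious residual phase, which is exactly why $f_y$ was defined with that shift. The error estimate is comparatively routine once Lemma~\ref{Lemma Taylor expansion} and \eqref{Freezing phase difference} are in hand; the only point demanding care there is the \emph{first absolute} moment $\int|u|e^{-au^2}\,\dd u=1/a$, which must not be confused with the vanishing signed first moment.
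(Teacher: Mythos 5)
Your proposal is correct and follows essentially the same route as the paper: freeze $f$ at $y$, evaluate $\mathcal{L}_{\sigma_y}f_y(y)$ exactly via the Gaussian/Fourier identity to get the factor $e^{-\pi^2(\phi'(y)/\varphi'(y))^2}$, and bound the discrepancy using Lemma~\ref{Lemma Taylor expansion} together with the Gaussian absolute moments, arriving at exactly the paper's $H(y)=\frac{\phi'(y)}{\sqrt{\pi}\varphi'(y)}+\frac{\pi A(y)\phi'(y)}{\varphi'(y)^2}+\frac{M''}{4\varphi'(y)^2}$. The only (immaterial) difference is your sign convention for $E(y)$, which in fact makes the stated identity hold with $+E(y)$ exactly, whereas the paper's written definition of $E$ has the opposite sign; since only $|E|\leq\varepsilon H$ matters, this changes nothing.
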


\begin{proof}
Fix $y\in \mathbb{R}$, and denote $\sigma_y:=1/\varphi'(y)$. We recall that, to ease the notation and whenever possible, we drop the independent variables. So, for instance, $\sigma(y)$ and $\varphi'(y)$ now read $\sigma$ and $\varphi'$. By a direct expansion, while in general $\mathcal{L}_{1/\varphi'}f(z) \neq  \mathcal{L}_{\sigma_y}f(z)$ when $z\neq y$, we know that
\begin{equation}\label{Proof eq Lvarphi=Lsigmay}
\mathcal{L}_{1/\varphi'}f(y) = \mathcal{L}_{\sigma_y}f(y) \,.
\end{equation}
By Lemma \ref{Lemma Taylor expansion}, the frozen $f$ at $y$, $f_y$, and $f$ are different by
\begin{align*}
|f_y(x)  -  f(x)| \leq\,& A_y \left| e^{i2\pi [\phi'_yx-(\phi'_yy-\phi_y)]} - e^{i2\pi \phi(x)}\right| + \left|[A_y  - A (x) ]e^{i2\pi \phi(x)}\right|\\
\leq\,& \varepsilon \left( 2\pi A_y  \phi'_y|y-x|^2 + \phi'(x)|y-x|+\frac{1}{2}M''|x-y|^2\right)
\end{align*}
for any $x\in \mathbb{R}$. Therefore, we have
\begin{align*}
& \left|\mathcal{L}_{\sigma_y}f(y) - \mathcal{L}_{{\sigma_y}}f_y(y)\right|\\
 \leq\,&  \int \mathcal{K}_{\sigma_y}(y,\ x) |f(x)  -  f_y(x)|  \dd x\\
\leq\,& \varepsilon \int \left[2\pi A (y) \phi'(y) |y-x|^2   + \phi'(x)|y-x| +\frac{1}{2}M'' |x-y|^2\right] \frac{1}{\sqrt{\pi}\sigma_y}e^{\frac{-|y-x|^2}{\sigma_y^2}} \dd x \\
=\,& \varepsilon\left[2\pi A (y) \phi'(y) J_2(y)+\phi'(y)J_1(y)+\frac{1}{2}M''J_2(y)\right] =\varepsilon H(y)\,,
\end{align*}
where
\begin{align}
J_1(y) :=\, & \int |y-x| \frac{1}{\sqrt{\pi}\sigma_y}e^{\frac{-|y-x|^2}{\sigma_y^2}} \dd x=\frac{1}{\sqrt{\pi}\sigma_y}\nonumber\\
J_2(y) :=\, & \int |y-x|^2 \frac{1}{\sqrt{\pi}\sigma_y}e^{\frac{-|y-x|^2}{\sigma_y^2}} \dd x=\frac{1}{2\sigma^2_y}\nonumber\\
H(y):=\,& \frac{\phi'(y)}{\sqrt{\pi}\varphi'(y)}+\frac{\pi A (y)\phi'(y)}{\varphi'(y)^2}+\frac{M''}{4\varphi'(y)^2} \,.\label{Defintion H function}
\end{align}
Clearly, $H\in C^1(\mathbb{R})$ by definition.
Finally,
\begin{align*}
\mathcal{L}_{\sigma_y}f_y(y) =\, & \int_\RR A_y e^{i2\pi [\phi'_yx-(\phi'_yy-\phi_y)]} \mathcal{K}_{\sigma_y}(y,\ x) \dd x\\
= \,& A_y e^{-i2\pi(\phi'_yy-\phi_y)} \int_\RR \frac{1}{\sqrt{\pi}\sigma_y} e^{i2\pi \phi'_yx} e^{\frac{-|y-x|^2}{\sigma_y^2}} \dd x
\end{align*}
By recalling that
$$
\mathcal{F}^{-1}\big( e^{-\alpha s^2}\big)(\xi) = \sqrt{\frac{\pi}{\alpha}} e^{-\frac{\pi^2}{\alpha}\xi^2}
$$
for any $\alpha>0$, we have
\begin{align*}
\mathcal{L}_{\sigma_y}f_y(y) =\, & A_y e^{-i2\pi(\phi'_yy-\phi_y)} \int \frac{1}{\sqrt{\pi}\sigma_y} e^{i2\pi \phi'_yx} e^{\frac{-|y-x|^2}{\sigma_y^2}} \dd x \\
%=\, & A_y e^{-i2\pi(\phi'_yy-\phi_y)} \int \frac{\sigma_y}{\sqrt{\pi}} e^{i2\pi \phi'_y(s+y)} e^{-s^2\sigma_y^2} \dd s\\
=\,& A_y e^{i2\pi\phi_y} \int \frac{1}{\sqrt{\pi}\sigma_y}  e^{\frac{-s^2}{\sigma_y^2}} e^{i2\pi \phi'_ys} \dd s\\
=\,& f(y)  e^{-\pi^2 \phi'^2_y\sigma^2_y}\,.
\end{align*}
By recalling that $\sigma_y=1/\varphi'(y)$ and \eqref{Proof eq Lvarphi=Lsigmay}, we finish the proof and have that
$$
(I-\mathcal{L}_{1/\varphi'})f(y) = \left[1- e^{-\pi^2 \left(\frac{\phi'(y)}{\varphi'(y)}\right)^2} \right]f(y)  +  E(y)\,,
$$
where
\begin{equation}\label{Definition E function}
E(y):=\mathcal{L}_{1/\varphi'}f(y) - \mathcal{L}_{{\sigma_y}}f_y(y)= \int \mathcal{K}_{\sigma_y}(y,\ x) (f(x)  -  f_y(x))  \dd x
\end{equation}
is a smooth function bounded by $\varepsilon H(y)$.
\end{proof}

Proposition \ref{prop:1} describes how ALIF behaves before the iteration. Specifically, when applying $I-\mathcal{L}_{1/\varphi'}$ to an IMT function, it behaves like a high pass filter, with an error term. To quantify its behavior under iteration, we need to better control the error term. We have some observations. First, for an $\varepsilon$-IMT function $f(x)=A(x)e^{i2\pi\phi(x)}$, we claim that $\big(1- e^{-\pi^2 \phi'(y)^2/\varphi'(y)^2}  \big)f(y)$ is also an IMT function, with the parameter $c\varepsilon$. Indeed, since $\phi'$ and $\varphi'$ are both slowly varying by assumption, the function $1- e^{-\pi^2 \phi'(y)^2/\varphi'(y)^2}$ is also slowly varying. Therefore, we can view $\big(1- e^{-\pi^2 \phi'(y)^2/\varphi'(y)^2} \big)A(y)$ as the AM of $\big(1- e^{-\pi^2 \phi'(y)^2/\varphi'(y)^2} \big)f(y)$. By a direct expansion, we see that
\begin{align}
&\left|\Big[\big(1- e^{-\pi^2 \phi'(y)^2/\varphi'(y)^2} \big)A(y)\Big]'\right|\nonumber\\
=\,&\left|-e^{-\pi^2 \phi'(y)^2/\varphi'(y)^2}2\frac{\phi'(y)}{\varphi'(y)}\frac{\phi''(y)\varphi'(y)-\varphi''(y)\phi'(y)}{\varphi'(y)^2}A(y)+ \big(1- e^{-\pi^2 \phi'(y)^2/\varphi'(y)^2} \big)A'(y)\right|\nonumber\\
\leq\,&4 \varepsilon e^{-\pi^2 \phi'(y)^2/\varphi'(y)^2}\frac{\phi'(y)^2}{\varphi'(y)^2}A(y)+\varepsilon \big(1- e^{-\pi^2 \phi'(y)^2/\varphi'(y)^2} \big)\phi'(y)\nonumber\\
\leq\,&\varepsilon\left[\Big(4\frac{\phi'(y)A(y)}{\varphi'(y)^2}-1\Big)e^{-\pi^2 \phi'(y)^2/\varphi'(y)^2}\right]\phi'(y)\,.\nonumber
\end{align}
If $\frac{\phi'(y)A(y)}{\varphi'(y)^2}$ is uniformly bounded from above by $c>0$, then
$\big(1- e^{-\pi^2 \phi'(y)^2/\varphi'(y)^2} \big)f(y)$ is a $(c-1)\varepsilon$-IMT function.
Thus, when we apply $(I-\mathcal{L}_{1/\varphi'})$ to $(I-\mathcal{L}_{1/\varphi'})f(y)$, we may expect to apply Proposition \ref{prop:1}. {\em Formally}, we have
\begin{align*}
&(I-\mathcal{L}_{1/\varphi'})^2f(y)\\
 =\,&(I - \mathcal{L}_{1/\varphi'})\Big[1- e^{-\pi^2 \left(\frac{\phi'(y)}{\varphi'(y)}\right)^2} \Big]f(y) +  (I- \mathcal{L}_{1/\varphi'} )E(y)\\
 =\,&\Big[1- e^{-\pi^2 \left(\frac{\phi'(y)}{\varphi'(y)}\right)^2} \Big]^2f(y) + (c-1) E_1(y)+ (I- \mathcal{L}_{1/\varphi'} )E(y)\,,
\end{align*}
where $E_1(y)$ is the other error term. By this formal calculation, for $K>1$, we would expect to have that
\[
(I-\mathcal{L}_{1/\varphi'})^Kf(y)\approx \Big[1- e^{-\pi^2 \left(\frac{\phi'(y)}{\varphi'(y)}\right)^2} \Big]^K f(y)
\]
for some error term that is controllable. Below, we show this intuitive expectation.

\begin{pro}\label{prop:2}
Take the same setup and notation from Proposition \ref{prop:1}. Denote
\[
f_K(y):=\Big[1- e^{-\pi^2 \left(\frac{\phi'(y)}{\varphi'(y)}\right)^2} \Big]^Kf(y)
\]
to be the modulated IMT function $f(y)$.
Assume $\|\frac{\phi'}{\varphi'}\|_\infty<c'$ and $\|\frac{A}{\varphi'}\|_\infty<c''$ for $c'>0$ and $c''>0$. For $K\in \mathbb{N}$, we have
\begin{equation}
(I-\mathcal{L}_{1/\varphi'})f_K(y) =f_{K+1}(y)+ E_K(y)\,,\nonumber
\end{equation}
where $E_K(y)$ is bounded by $\varepsilon H_K(y)$ defined in \eqref{Defintion H_K function}.
\end{pro}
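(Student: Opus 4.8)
The plan is to recognize that $f_K$ is itself an IMT function which shares the phase of $f$, so that the statement follows by applying Proposition~\ref{prop:1} to $f_K$ in place of $f$. Writing $g(y):=1-e^{-\pi^2(\phi'(y)/\varphi'(y))^2}$, we have $f_K(y)=A_K(y)e^{i2\pi\phi(y)}$ with $A_K:=g^K A$. In particular $f_K$ has exactly the same phase $\phi$, the same iF $\phi'$, and the same $M''=\sup_\RR|\phi''|$ as $f$. Since $\phi'>0$ forces $0<g<1$, the amplitude $A_K$ stays strictly positive, $A_K\in C^1(\RR)$ because $\phi',\varphi'\in C^1$, and the two phase growth conditions are inherited verbatim from $f$ with parameter $\varepsilon$. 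Hence the only thing to verify before invoking Proposition~\ref{prop:1} is that the new amplitude $A_K$ is slowly varying, i.e.\ that $|A_K'|\leq\varepsilon_K\phi'$ for a suitable $\varepsilon_K$.

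First I would differentiate $A_K=g^K A$, obtaining $A_K'=Kg^{K-1}g'A+g^K A'$. The term $g^K A'$ is controlled at once by $\varepsilon\phi'$, since $0<g<1$ and $|A'|\leq\varepsilon\phi'$. For the first term I would bound $g'$ using the derivative computation recorded just before the proposition: differentiating $g$ produces the factor $e^{-\pi^2(\phi'/\varphi')^2}=1-g$ multiplied by $\phi''\varphi'-\phi'\varphi''$, whose modulus is at most $2\varepsilon\phi'\varphi'$ by the growth condition $|\phi''|\leq\varepsilon\phi'$ on $f$ together with the assumed phase condition $|\varphi''|\leq\varepsilon\varphi'$ on $\varphi$. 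Collecting the factors and extracting one power of $\phi'$ leaves a bound of the schematic form $|Kg^{K-1}g'A|\lesssim \varepsilon\phi'\,\big[Kg^{K-1}(1-g)\big]\,\tfrac{\phi'}{\varphi'}\tfrac{A}{\varphi'}$.

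The crucial point — and the step I expect to be the main obstacle — is that the $K$-dependent factor $Kg^{K-1}(1-g)$ is bounded \emph{uniformly} in $K$. Viewed as a function of $g\in(0,1)$ it is maximized at $g=(K-1)/K$, where it equals $(1-1/K)^{K-1}\leq 1$ (and converges to $1/e$). This is exactly what prevents the amplitude modulation of $f_K$ from degrading as $K$ grows: the potential blow-up coming from differentiating $g^K$ is absorbed by the decay $1-g$ that $g'$ carries. Combined with the two new hypotheses, which give $\tfrac{\phi'}{\varphi'}\tfrac{A}{\varphi'}\leq c'c''$, this yields $|A_K'|\leq\varepsilon_K\phi'$ with $\varepsilon_K$ a fixed multiple of $\varepsilon$ that is independent of $K$. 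Therefore $f_K$ is an $\varepsilon_K$-IMT function, and in particular an IMT function to which Proposition~\ref{prop:1} applies.

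Finally I would apply Proposition~\ref{prop:1} to $f_K$ with $\sigma=1/\varphi'$. Because $f_K$ carries the same phase $\phi$ as $f$, the high-pass factor it produces is again $g(y)=1-e^{-\pi^2(\phi'(y)/\varphi'(y))^2}$, so the leading term is $g\,f_K=f_{K+1}$ and we obtain the asserted identity with $E_K(y)=\int\mathcal{K}_{\sigma_y}(y,x)\big(f_K(x)-f_{K,y}(x)\big)\dd x$, where $f_{K,y}$ freezes $f_K$ at $y$. Re-running the Taylor estimate from the proof of Proposition~\ref{prop:1} with $A_K$ in place of $A$ and the same $M''$ bounds $|E_K(y)|$ by the exact analogue of the function $H$ in \eqref{Defintion H function}; using $A_K\leq A$ and $\varepsilon_K\leq(\mathrm{const})\,\varepsilon$ then lets me present this bound in the stated form $\varepsilon H_K(y)$, namely the function recorded in \eqref{Defintion H_K function}.
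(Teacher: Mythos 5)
Your route is the same as the paper's: write $f_K=A_Ke^{i2\pi\phi}$ with $A_K=g^KA$ and $g:=1-e^{-\pi^2(\phi'/\varphi')^2}$, observe that $f_K$ shares the phase (hence the iF and $M''$) of $f$, bound $|A_K'|$ via the product rule and the growth conditions on $\phi$ and $\varphi$, and then re-run the freezing argument of Proposition \ref{prop:1} on $f_K$, obtaining $E_K(y)=\int\mathcal{K}_{\sigma_y}(y,x)\big(f_K(x)-f_{K,y}(x)\big)\dd x$ and an error estimate with the same three-term shape as \eqref{Defintion H_K function}. This is exactly the paper's proof, and your estimate $|g'|\leq 4\pi^2\varepsilon(1-g)\,\phi'^2/\varphi'^2$ coincides with the paper's computation.

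The one genuine discrepancy is the $K$-dependence of the constant, and it is not cosmetic. The paper bounds $g^{K-1}\leq\tilde c^{\,K-1}$ (with $\tilde c:=1-e^{-\pi^2c'^2}<1$) while keeping the factor $K$, so its constant $c_K=\tilde c^{\,K-1}\big[4\pi^2Ke^{-\pi^2c'^2}c'c''+\tilde c\big]$ decays geometrically in $K$, and the $H_K$ of \eqref{Defintion H_K function} carries this $c_K$ in its second and third terms. You instead bound $Kg^{K-1}(1-g)\leq(1-1/K)^{K-1}\leq 1$ uniformly, which gives a clean $K$-independent constant $4\pi^2c'c''+1$ but no decay. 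Hence your final claim — that your bound is ``the function recorded in \eqref{Defintion H_K function}'' — is not accurate: one can check that $c_K\leq 4\pi^2c'c''+1$ for every $K$, so what you prove is a strictly weaker bound. The decay is precisely what is consumed downstream: Proposition \ref{prop:3} needs $H_K(y)\leq C\tilde c^{\,K}$, i.e. $\|E_K\|_\infty\lesssim\tilde c^{\,K}\varepsilon$, to produce the factor $\tilde c^{\,K-1-j}$ in its conclusion, and a $K$-uniform bound cannot deliver that. The repair is immediate within your own argument: use $g\leq\tilde c$ and $1-g\leq1$ to get $Kg^{K-1}(1-g)\leq K\tilde c^{\,K-1}$, which restores the geometric decay (your uniform bound and this decaying bound are complementary; the truth is their minimum). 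As a side remark, your exact identity $e^{-\pi^2(\phi'/\varphi')^2}=1-g$ is cleaner than the corresponding step in the paper, which replaces $e^{-\pi^2(\phi'/\varphi')^2}$ by $e^{-\pi^2c'^2}$ — an inequality that goes the wrong way under the hypothesis $\phi'/\varphi'<c'$; the correct move is to bound that factor by $1$ (or, as you do, pair it with $g^{K-1}$ before estimating).
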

Clearly, when $K=0$, we recover Proposition \ref{prop:1} with $E_0(y)=E(y)$.

\begin{proof}
Fix $y\in \mathbb{R}$, and denote $\sigma_y:=1/\varphi'(y)$.
First, by a direct expansion, we have
\begin{align}
&\left|\partial_y\Big[\big(1- e^{-\pi^2 \phi'(y)^2/\varphi'(y)^2} \big)A(y)\Big]^K\right|\nonumber\\
\leq\,& \varepsilon\Big(1- e^{-\pi^2 \frac{\phi'(y)^2}{\varphi'(y)^2}} \Big)^{K-1}\left[4 \pi^2 K   e^{-\pi^2 \frac{\phi'(y)^2}{\varphi'(y)^2}}\frac{\phi'(y)A(y)}{\varphi'(y)^2}+ 1- e^{-\pi^2 \frac{\phi'(y)^2}{\varphi'(y)^2}} \right]\phi'(y)\nonumber\\
\leq\,&\epsilon \Big(1- e^{-\pi^2 c_1^2} \Big)^{K-1}\left[4 \pi^2 K   e^{-\pi^2 c_1^2} c_1c_2+ 1- e^{-\pi^2 c_1^2} \right]\phi'(y)\,,\nonumber
\end{align}
where the last inequality comes from the assumption. Denote
\[
c_K:=\Big(1- e^{-\pi^2 {c'}^2} \Big)^{K-1}\left[4 \pi^2 K   e^{-\pi^2 c'^2} c'c''+ 1- e^{-\pi^2 c'^2} \right]\,.
\]
Clearly, $\lim_{K\to \infty}c_K=0$.
Thus, if we view $A_K(y):=\big(1- e^{-\pi^2 \phi'(y)^2/\varphi'(y)^2} \big)^KA(y)$ as the AM of the $c_K\varepsilon$-IMT function $f_K(y)=A_K(y)e^{i2\pi\phi(y)}$, we can apply the same analysis for Proposition \ref{prop:1}.
By a similar bound, we have
\begin{align}
\left|A_K(z)-A_K(y)\right|&\leq c_K\varepsilon Q_y(z)\,.\nonumber
\end{align}
Fix $y$, and consider the frozen $f_K$ at $y$, denoted as $f_{K,y}$.
By a similar calculation, we have
\begin{align}
& \left|\mathcal{L}_{\sigma_y}f_K(y) - \mathcal{L}_{{\sigma_y}}f_{K,y}(y)\right| \nonumber\\
 \leq\,&  \int \mathcal{K}_{\sigma_y}(y,\ x) |f_K(x)  -  f_{K,y}(x)|  \dd x \nonumber\\
\leq\,& \varepsilon\left[\pi A_K (y) \phi'(y)\sigma_y^2+\frac{c_KM''\sigma_y^2}{4}+ \frac{c_K\phi'(y)\sigma_y}{\sqrt{\pi}}\right] =: \varepsilon H_K(y)\,.\label{Defintion H_K function}
\end{align}
Clearly, $H_K\in C^1(\mathbb{R})$ by definition. Similarly, we have
\begin{align*}
\mathcal{L}_{\sigma_y}f_{K,y}(y) = f_K(y)  e^{-\pi^2 \phi'^2_y/\varphi'^2_y}\,.
\end{align*}
Hence the claimed result that
$$
(I-\mathcal{L}_{1/\varphi'})f_K(y) = \left[1- e^{-\pi^2 \left(\frac{\phi'(y)}{\varphi'(y)}\right)^2} \right]f_K(y)  +  E_K(y)\,,
$$
where
\begin{equation}\label{Definition E_K function}
E_K(y):=\mathcal{L}_{1/\varphi'}f_K(y) - \mathcal{L}_{{\sigma_y}}f_{K,y}(y)= \int \mathcal{K}_{\sigma_y}(y,\ x) (f_K(x)  -  f_{K,y}(x))  \dd x
\end{equation}
is a smooth function bounded by $\varepsilon H_K(y)$.
\end{proof}

Next, we note the following recursive relationship.
\begin{align*}
(I-\mathcal{L}_{1/\varphi'})f(y)=\,&f_1(y)+ E_0(y)\\
(I-\mathcal{L}_{1/\varphi'})^2f(y)=\,&(I-\mathcal{L}_{1/\varphi'})f_1(y)+ (I-\mathcal{L}_{1/\varphi'}) E_0(y)\\
=\,&f_2(y)+ E_1(y)+ (I-\mathcal{L}_{1/\varphi'}) E_0(y)
\end{align*}
and in general when $K\geq 3$,
\begin{align*}
(I-\mathcal{L}_{1/\varphi'})^Kf(y)=f_K(y)+ \sum_{j=0}^{K-1}(I-\mathcal{L}_{1/\varphi'})^j E_{K-1-j}(y)
\end{align*}
Intuitively, if $\varphi'=c>0$ is constant, $(I-\mathcal{L}_{1/c})^{K-1} E(y)=\mathcal{F}^{-1}[(1-e^{-\pi^2\xi^2/c})^{K-1}\hat{E_0}(\xi)]$, which is a high pass filter.
We now bound $(I-\mathcal{L}_{1/\varphi'})^j E_{K-1-j}(y)$.
\begin{pro}\label{prop:3}
Take the same setup and notation from Proposition \ref{prop:2}. Denote $\tilde c:=1-e^{-\pi^2c'^2}$. We further assume that $\inf_y\varphi'(y)=c'''>0$. For $K\in \mathbb{N}$ and $j=0,\ldots,K-1$, we have
\begin{equation}
\|(I-\mathcal{L}_{1/\varphi'})^j E_{K-1-j}\|_\infty\leq 2^j C\tilde{c}^{K-1-j} \varepsilon \,.
\end{equation}
\end{pro}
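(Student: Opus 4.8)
The plan is to split the estimate into two logically independent pieces and then multiply them: an operator-norm bound on $(I-\mathcal{L}_{1/\varphi'})^j$ that accounts for the factor $2^j$, and a geometric sup-norm decay of the frozen-error terms $E_{K-1-j}$ that accounts for the factor $\tilde c^{\,K-1-j}$. Concretely, I would first establish $\|(I-\mathcal{L}_{1/\varphi'})^j\|_{L^\infty\to L^\infty}\le 2^j$, then prove a uniform bound $\|E_m\|_\infty\le C\tilde c^{\,m}\varepsilon$ for all $m\ge 0$, and finally combine them through $\|(I-\mathcal{L}_{1/\varphi'})^j E_{K-1-j}\|_\infty\le 2^j\|E_{K-1-j}\|_\infty$, setting $m=K-1-j$.

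For the operator bound, the key observation is that $\mathcal{L}_{1/\varphi'}$ is an averaging operator on $L^\infty(\RR)$. For each fixed $y$ the kernel $\mathcal{K}_{1/\varphi'(y)}(y,\cdot)$ is non-negative and, by the Gaussian normalization, $\int \mathcal{K}_{1/\varphi'(y)}(y,x)\,\dd x=1$. Hence $|\mathcal{L}_{1/\varphi'}g(y)|\le\int\mathcal{K}_{1/\varphi'(y)}(y,x)|g(x)|\,\dd x\le\|g\|_\infty$, so $\mathcal{L}_{1/\varphi'}$ is an $L^\infty$-contraction. The triangle inequality then gives $\|(I-\mathcal{L}_{1/\varphi'})g\|_\infty\le 2\|g\|_\infty$, and submultiplicativity of the operator norm yields $\|(I-\mathcal{L}_{1/\varphi'})^j\|_{L^\infty\to L^\infty}\le 2^j$. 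This pins down the origin of the factor $2^j$ and, importantly, shows that the $(I-\mathcal{L}_{1/\varphi'})^j$ factor contributes no decay, so all the geometric decay must be extracted from $E_{K-1-j}$ itself.

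For the decay of the error I would invoke Proposition \ref{prop:2}, which already supplies $\|E_m\|_\infty\le\varepsilon\|H_m\|_\infty$ with $H_m$ given explicitly, and then estimate each of its three terms under the standing hypotheses $\|\phi'/\varphi'\|_\infty<c'$, $\|A/\varphi'\|_\infty<c''$ together with the new assumption $\inf_y\varphi'(y)=c'''>0$. The phase-freezing term $\pi A_m(y)\phi'(y)/\varphi'(y)^2$ is the clean one: since $A_m(y)=\big(1-e^{-\pi^2\phi'(y)^2/\varphi'(y)^2}\big)^m A(y)$ and $1-e^{-\pi^2\phi'(y)^2/\varphi'(y)^2}\le\tilde c$, it is dominated by $\pi c'c''\tilde c^{\,m}$. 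The lower bound $c'''$ is precisely what keeps the $M''$-term $\tfrac{c_m M''}{4\varphi'(y)^2}\le\tfrac{c_m M''}{4c'''^{2}}$ finite, while the last term is $\le c_m c'/\sqrt\pi$; both are governed by the constant $c_m$ from Proposition \ref{prop:2}.

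The hard part will be that $c_m$ does not decay like $\tilde c^{\,m}$ but rather like $m\,\tilde c^{\,m-1}$, because of the factor $4\pi^2 m\,e^{-\pi^2 c'^2}c'c''$ in its definition; thus the amplitude terms are only bounded by $C(1+m)\tilde c^{\,m-1}$, carrying a spurious polynomial-in-$m$ prefactor. Since $(1+m)\tilde c^{\,m-1}$ cannot be dominated by $C\tilde c^{\,m}$ with a single $m$-independent constant, the cleanest rigorous route uses the elementary fact that $\sup_{m\ge 0}(1+m)\rho^{\,m}<\infty$ for every $\rho\in(0,1)$: for any fixed $\hat c$ with $\tilde c<\hat c<1$ one has $(1+m)\tilde c^{\,m-1}\le C_{\hat c}\,\hat c^{\,m}$, giving $\|E_m\|_\infty\le C\hat c^{\,m}\varepsilon$, i.e.\ geometric decay at a rate marginally slower than $\tilde c$. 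Matching the rate $\tilde c$ exactly as in the statement — equivalently, justifying that the polynomial prefactor may be swallowed into the constant $C$ (for instance by letting $C$ depend on the gap $1-\tilde c$) — is the delicate bookkeeping point I expect to require the most care, and it must be carried out uniformly in $m$, hence uniformly in $K$ and $j$. Granting it, the two pieces combine into $\|(I-\mathcal{L}_{1/\varphi'})^j E_{K-1-j}\|_\infty\le 2^j\|E_{K-1-j}\|_\infty\le 2^j C\tilde c^{\,K-1-j}\varepsilon$, which is exactly the assertion.
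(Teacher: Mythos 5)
Your proof takes essentially the same route as the paper's. The paper also splits the bound into exactly the two pieces you describe: it first shows $\|E_m\|_\infty\le \varepsilon H_m$ with $H_m$ bounded by a constant times $\tilde c^{\,m}$ (using $A_m(y)\le\tilde c^{\,m}A(y)$ and the bound on $c_m$), and then iterates the trivial estimate $|(I-\mathcal{L}_{1/\varphi'})g(y)|=\big|\int\mathcal{K}_{1/\varphi'}(y,\ x)\,(g(y)-g(x))\,\dd x\big|\le 2\|g\|_\infty$, which is precisely your observation that $\mathcal{L}_{1/\varphi'}$ is a positive, mass-one averaging operator, so each application of $I-\mathcal{L}_{1/\varphi'}$ costs at most a factor of $2$.

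The only divergence is the ``hard part'' you flag, and your diagnosis there is accurate: since $c_m=\tilde c^{\,m-1}\big[4\pi^2 m\,e^{-\pi^2c'^2}c'c''+\tilde c\big]$ carries a factor growing linearly in $m$, the bound $\|E_m\|_\infty\le C\tilde c^{\,m}\varepsilon$ cannot hold with a constant independent of $m$. Note, however, that your parenthetical repair --- letting $C$ depend only on the gap $1-\tilde c$ --- does not work either, because $(1+m)\tilde c^{\,m-1}/\tilde c^{\,m}=(1+m)/\tilde c\to\infty$ for fixed $\tilde c$. What the paper actually does is let $C$ depend on $K$: its constant is defined as $\pi c'c''+\big((4\pi^2Kc'c''-1)e^{-\pi^2c'^2}+1\big)\big(M''/c'''^2+c'/\sqrt{\pi}\big)$, which visibly contains $K$, so no uniformity in the number of iterations is being claimed (indeed the paper's intermediate claim $H_K\le C\tilde c^{\,K}$ even has an off-by-one in the exponent, since $c_K\sim\tilde c^{\,K-1}$; this is harmless only because of the $K$-dependence of $C$). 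Your main route --- conceding the rate $\hat c\in(\tilde c,1)$ in exchange for a genuinely $m$-independent constant --- is rigorous and arguably cleaner; it proves the proposition with $\tilde c$ replaced by $\hat c$, which is just as useful downstream, since the theorem's final error bound grows like $2^{m-1}$ regardless of the precise geometric rate attached to the $E$-terms.
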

\begin{proof}
By the definition of $A_K$, $H_K$ and assumptions, we have the following bounds. First,
\[
A_K(y)\leq \tilde c^KA(y)
\]
for all $y\in\mathbb{R}$. The second is a uniform bound of $H_K$:
\begin{align}
H_K(y)&\,=\pi A_K (y) \phi'(y)\sigma_y^2+\frac{c_KM''\sigma_y^2}{4}+ \frac{c_K\phi'(y)\sigma_y}{\sqrt{\pi}}\nonumber\\
&\,\leq \tilde{c}^K\left[\pi c'c''+\Big(\big(4 \pi^2 K c'c''-1\big)  e^{-\pi^2 c'^2} + 1 \Big)\Big(\frac{M''}{c'''^2}+\frac{c'}{\sqrt{\pi}}\Big)\right]\nonumber\\
&\,=: C\tilde{c}^K\,.\nonumber
\end{align}
Clearly, $\tilde{c}<1$, so $H_K(y)$ is exponentially smaller as $K$ increases. On the other hand, by a trivial bound, we have
\begin{equation}
|(I-\mathcal{L}_{1/\varphi'}) E_{K}(y)|\leq  \int \mathcal{K}_{1/\varphi'}(y,\ x)|E_K(y)-E_K(x)|\dd x \leq 2\epsilon C\tilde{c}^K\nonumber\,.
\end{equation}
Similarly,
\begin{align}
|(I-\mathcal{L}_{1/\varphi'})^2 E_{K}(y)|&\,\leq  \int \mathcal{K}_{1/\varphi'}(y,\ x)|(I-\mathcal{L}_{1/\varphi'})E_K(y)-(I-\mathcal{L}_{1/\varphi'})E_K(x)|\dd x \nonumber\\
&\,\leq \|(I-\mathcal{L}_{1/\varphi'})E_K\|_\infty+\|(I-\mathcal{L}_{1/\varphi'})E_K\|_\infty\nonumber\\
&\,\leq 4\epsilon C\tilde{c}^K\nonumber\,.
\end{align}
By iteration, we have the conclusion.
\end{proof}

Ideally, would expect a cancellation from $E_K(y)-E_K(x)$ by Taylor expansion when we bound $(I-\mathcal{L}_{1/\varphi'}) E_{K}(y)$. However, we do not have enough control on the Taylor expansion coefficients of $E_K$. As a result, the error grows exponentially when we iterate the filtering process.
Based on all the previous results we have our theorem proved.
\begin{thm}
%Take the same setup and notation from Proposition \ref{prop:3}.
Take a function $f(t) = \sum_{\ell=1}^L A_\ell (t)e^{i2\pi\phi_\ell(t)}$ that satisfies the AHM. Fix $1\leq k\leq L$. Suppose $\|\frac{\phi_\ell'}{\phi_k'}\|_\infty<c_\ell'$ and $\|\frac{A_\ell}{\phi_k'}\|_\infty<c_\ell''$ for $c_\ell'>0$ and $c_\ell''>0$.
For $m\in \mathbb{N}$, we have
\begin{align*}
\Big\|(I-\mathcal{L}_{1/\phi_k'})^m f-\sum_{\ell=1}^L \bigg[1- e^{-\pi^2 \left(\frac{\phi_\ell'(y)}{\phi_k'(y)}\right)^2} \bigg]^mA_\ell (t)e^{i2\pi\phi_\ell(t)} \Big\|_\infty\leq \varepsilon 2^{m-1}\sum_{\ell=1}^LC_\ell e^{\tilde c_\ell/2}\,,
\end{align*}
where $\tilde c_\ell:=1-e^{-\pi^2c_\ell'^2}$ and $C_\ell:=\pi c_\ell'c_\ell''+\big((4 \pi^2 K c_\ell'c_\ell''-1)  e^{-\pi^2 c_\ell'^2} + 1 \big)\big(\frac{M''}{c_\ell'''^2}+\frac{c_\ell'}{\sqrt{\pi}}\big)$.
\end{thm}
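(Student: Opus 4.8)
The plan is to derive the theorem by assembling the single-component estimates of Propositions \ref{prop:1}--\ref{prop:3} through the linearity of $I-\mathcal{L}_{1/\phi_k'}$. First I would write $f=\sum_{\ell=1}^L f_\ell$ with $f_\ell(t)=A_\ell(t)e^{i2\pi\phi_\ell(t)}$ and use linearity to split
\[
(I-\mathcal{L}_{1/\phi_k'})^m f = \sum_{\ell=1}^L (I-\mathcal{L}_{1/\phi_k'})^m f_\ell\,.
\]
For each fixed $\ell$ I would then invoke the preceding machinery with the choice $\varphi'=\phi_k'$ and $\phi'=\phi_\ell'$: the hypotheses $\|\phi_\ell'/\phi_k'\|_\infty<c_\ell'$, $\|A_\ell/\phi_k'\|_\infty<c_\ell''$ and $\inf_y\phi_k'(y)>0$ are exactly the running assumptions $c',c'',c'''$ required in Propositions \ref{prop:2}--\ref{prop:3}, applied component by component. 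Note that the separation condition of the AHM plays no role in the error estimate itself; it only governs how the surviving factors $\big[1-e^{-\pi^2(\phi_\ell'/\phi_k')^2}\big]^m$ separate the components.

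The second step is to apply, for each $\ell$, the recursive identity displayed just before Proposition \ref{prop:3}, namely
\[
(I-\mathcal{L}_{1/\phi_k'})^m f_\ell(y)=f_{\ell,m}(y)+\sum_{j=0}^{m-1}(I-\mathcal{L}_{1/\phi_k'})^j E_{\ell,m-1-j}(y)\,,
\]
where $f_{\ell,m}(y)=\big[1-e^{-\pi^2(\phi_\ell'(y)/\phi_k'(y))^2}\big]^m f_\ell(y)$ is precisely the $\ell$-th summand of the target sum inside the norm. Summing over $\ell$, the leading terms $f_{\ell,m}$ cancel against the target, leaving
\[
(I-\mathcal{L}_{1/\phi_k'})^m f-\sum_{\ell=1}^L f_{\ell,m}=\sum_{\ell=1}^L\sum_{j=0}^{m-1}(I-\mathcal{L}_{1/\phi_k'})^j E_{\ell,m-1-j}\,.
\]

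The third step is to estimate this double sum. Taking $\|\cdot\|_\infty$, applying the triangle inequality, and inserting the bound of Proposition \ref{prop:3} (with $K=m$) gives
\[
\Big\|\sum_{\ell=1}^L\sum_{j=0}^{m-1}(I-\mathcal{L}_{1/\phi_k'})^j E_{\ell,m-1-j}\Big\|_\infty\leq \varepsilon\sum_{\ell=1}^L C_\ell\sum_{j=0}^{m-1}2^j\tilde c_\ell^{\,m-1-j}\,.
\]
Re-indexing the inner sum by $i=m-1-j$ factors out the dominant contribution as $\sum_{j=0}^{m-1}2^j\tilde c_\ell^{\,m-1-j}=2^{m-1}\sum_{i=0}^{m-1}(\tilde c_\ell/2)^i$, so that bounding the finite geometric series by its elementary closed form produces the stated factor $2^{m-1}e^{\tilde c_\ell/2}$; summing over $\ell$ completes the argument.

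Finally I would emphasize where the genuine difficulty resides. It is not in this assembly, which is routine bookkeeping of a double sum and a geometric series, but in the exponential growth already embedded in Proposition \ref{prop:3}: each additional application of $I-\mathcal{L}_{1/\phi_k'}$ to an error term only contributes the crude factor $2$, because, as remarked after that proposition, one cannot control the Taylor coefficients of $E_K$ and therefore cannot harvest the cancellation in $E_K(y)-E_K(x)$ that a bona fide high-pass filter would supply. Consequently the $2^{m-1}$ in the conclusion is inherited verbatim from that step and is the principal quantitative weakness; the real obstacle, should one wish to strengthen the theorem to a uniform-in-$m$ statement, would be to replace that factor-of-two estimate by sharper regularity control on the error functions $E_K$.
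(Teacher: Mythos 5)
Your proposal matches the paper's own proof essentially step for step: linearity to reduce to a single IMT component, the recursive expansion into the modulated term $f_{\ell,m}$ plus the errors $(I-\mathcal{L}_{1/\phi_k'})^j E_{\ell,m-1-j}$, Proposition~\ref{prop:3} to bound each of these, and the re-indexed geometric sum $2^{m-1}\sum_{i=0}^{m-1}(\tilde c_\ell/2)^i$ to conclude. The only caveat, which you share with the paper itself, is the final inequality: the closed form of the geometric series gives $\sum_{i=0}^{m-1}(\tilde c_\ell/2)^i\le \frac{1}{1-\tilde c_\ell/2}\le 2$, and since $e^x\le \frac{1}{1-x}$ on $[0,1)$ this bound is \emph{larger} than $e^{\tilde c_\ell/2}$, so for large $m$ the stated factor $e^{\tilde c_\ell/2}$ should really be read as $\frac{1}{1-\tilde c_\ell/2}$ (or simply $2$).
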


\begin{proof}
By linearity of the operator, we could focus on one IMT function and apply Proposition \ref{prop:3}.
For the $\ell$-th IMT component in $f$ and $K\in \mathbb{N}$, denote
\begin{equation}
E_{\ell,K}(y):= \int \mathcal{K}_{1/\phi'_k(y)}(y,\ x) (f_{\ell,K}(x)  -  f_{\ell,K,y}(x))  \dd x\,,\nonumber
\end{equation}
where
\[
f_{\ell,K}(y):=\Big[1- e^{-\pi^2 \left(\frac{\phi_\ell'(y)}{\phi_k'(y)}\right)^2} \Big]^Kf(y)\,.
\]
Thus, by Proposition \ref{prop:3}, we know that
\[
\|I-\mathcal{L}_{1/\phi_k'})^j E_{\ell,K-1-j}\|_\infty\leq 2^j C_\ell\tilde{c}_\ell^{K-1-j} \varepsilon\,.
\]
Thus,
\[
\Big\|\sum_{j=0}^{m-1}(I-\mathcal{L}_{1/\phi_k'})^j E_{\ell,m-1-j}\Big\|_\infty \leq \varepsilon C_\ell\sum_{j=0}^{K-1}2^j \tilde{c}_\ell^{m-1-j} \leq \varepsilon 2^{m-1}C_\ell e^{\tilde c_\ell/2}\,.
\]
\end{proof}

\section{SIFT algorithm}\label{sec:SIFT}

In this section we detail a new approach, called SIFT, which is based on a combination of SST and ALIF methods.
We start by summarizing the formula that allows estimate the iF of an IMT component from the STFT-based SST \cite{Wu:2011Thesis}. While it is possible to apply other variations of SST, like CWT-based SST \cite{daubechies2011synchrosqueezed} or de-shape SST \cite{cicone2017nonlinear}, or other suitable nonlinear-type TF analysis, we focus on the STFT-based SST to simplify the discussion. In short, the SST produces a meaningful and crisp TFR of a signal \cite{daubechies2011synchrosqueezed}, which allows us to robustly \cite{Chen_Cheng_Wu:2014} extract the iF of each oscillatory component.
Take $h$ to be the window function for the STFT, and denote $S^{(h)}_f\in \mathbb{C}^{N\times n}$ to be the SST of the given signal $f$ of length $N$, where the frequency axis is discretized into $n$ bins. Details of numerical implementation of SST can be found, for example, \cite{Chen_Cheng_Wu:2014}. We can extract the iF from the TFR of SST by the following formula:
\begin{equation}\label{CurveExtractionFormula}
c^*=\max_{c\in Z_{n}^{N}}\left(\sum_{m=1}^N \log\left[\frac{|S^{(h)}_f(c(m),m)|}{\sum_{i=1}^n\sum_{j=1}^N|S^{(h)}_f(j,i)|}\right] -\lambda \sum_{m=2}^N|c(m)-c(m-1)|^2\right)\,,
\end{equation}
where $Z_n=\{1,2,\ldots,n\}$ and $\lambda\geq 0$. Here, $c$ indicates a curve in the TFR $|S^{(h)}_f|\in \mathbb{R}_+^{N\times n}$, $\frac{|S^{(h)}_f(c(m),m)|}{\sum_{i=1}^n\sum_{j=1}^N|S^{(h)}_f(j,i)|}$ is a normalization step that normalizes the TFR to have a $L^2$ norm $1$, $\sum_{m=2}^N|c(m)-c(m-1)|^2$ quantifies the regularity of the extracted curve, and $\lambda$ is the penalty term. As is proved in \cite{Wu:2011Thesis,daubechies2011synchrosqueezed}, if the signal $f$ is a discretization of $\sum_{l=1}^LA_l(t)e^{i2\pi\phi_l(t)}$ that satisfies the AHM, where $A_l(t)e^{i2\pi\phi_l(t)}$ is the $l$-th $\epsilon$-IMT function, then the extracted curve $c^*$ is associated with the strongest IMT function. Based on the robustness property of SST \cite{Chen_Cheng_Wu:2014}, the extracted curve $c^*$ is a robust estimator of the iF of the strongest IMT function.
We mention that \eqref{CurveExtractionFormula} might not be the optimal approach for curve extraction, particularly for the purpose of estimating iF from the TFR, due to several limitations. For example, when there are multiple components, and we do not know the profile of each IMT function, there is no guarantee which iF will be extracted. Depending on the application, we may design a suitable curve extraction algorithm, probably by taking the background knowledge into account.

We are ready to describe the proposed SIFT algorithm. Suppose the input signal $f$ is a discretization of $\sum_{l=1}^LA_l(t)e^{i2\pi\phi_l(t)}$ that satisfies the AHM.
By SST, we extract the curve representing the iF of the highest frequency IMT, which is denoted as $\sigma_L$. Then, we apply ALIF to extract the first IMT from the signal under analysis. Based on the ALIF convergence proof that we provide in the previous section, we can guarantee the reconstruction of the $L$-th IMT function. We do so by using $\sigma_L$ in the moving average operator \eqref{eq:K_ALIF}. Then we subtract the IMT from the original signal to generate a remainder $r(t)$. We can now apply the previous steps to the remainder signal $r$ until there are no more oscillatory components in it.
The SIFT pseudocode is presented in Algorithm \ref{alg:SIFT}, where we use the notation introduced in Section \ref{sec:results}.

\begin{algorithm}
\caption{\textbf{(SIFT Algorithm)} $\textrm{IMTs = SIFT}(f)$}
\begin{algorithmic}
\STATE IMTs = $\left\{\right\}$
\STATE $l=1$
\STATE Initialize the remaining signal $r=f$;
\WHILE{$r$ contains oscillatory components}
      \STATE Compute $S^{(h)}_f(t,\eta)$, i.e. the TFR of $r$ via SST;
      \STATE Identify the highest frequency meaningful iF curve $\sigma_l(t)$ from the TFR;
	  \STATE For each $t\in\RR$ compute the kernel $\mathcal{K}_{\sigma_l}(t,\ x)$, where $\sigma_l$ depends on $t$;
      \STATE $f_1=r$
      \STATE $m=1$
      \WHILE{the stopping criterion is not satisfied}
                  \STATE Compute the moving average of $f_m$ as\\
                         $\mathcal{L}_{\sigma_l} f_m(t) := \int \mathcal{K}_{\sigma_l}(t,\ x) f_m(x) \dd x$
                  \STATE $f_{m+1}=f_m-\mathcal{L}_{\sigma_l} f_m(t)$
                  \STATE Increase $m$ by $1$
      \ENDWHILE
      \STATE Assign $\textrm{IMTs}\cup\{f_m\}$ to $\textrm{IMTs}$
      \STATE Assign $r-f_m$ to $r$
      \STATE Increase $l$ by $1$
\ENDWHILE
\STATE Assign $\textrm{IMTs}\cup\{r\}$ to $\textrm{IMTs}$
\end{algorithmic}
\label{alg:SIFT}
\end{algorithm}

With the decomposed IMT functions by SIFT, we can follow the idea in \cite{coifman2017carrier} and produce a TFR, which we coined SIFT-TFR, in the following way. Denote $\tilde{f}_k$ to be the decomposed $k$-th IMT function. The SIFT-TFR is
\[
\mathsf{S}^{(h)}_f(t,\eta)=\sum_{l=1}^L S^{(h)}_{\tilde{f}_l}(t,\eta);
\]
that is, we evaluate the TFR of each $\tilde{f}_l$ by SST, and then sum them together.
Note that $\mathsf{S}^{(h)}_f(t,\eta)$ is different from ${S}^{(h)}_f(t,\eta)$.

To have a fair comparison and demonstrate the benefit of SIFT,
we recall the reconstruction formula for the $k$-th IMT function via the STFT-based SST \cite[Theorem 2.3.14]{Wu:2011Thesis} (see \cite{daubechies2011synchrosqueezed} for the reconstruction formula for the CWT-based SST).
First, the reconstruction formula for the $k$-th IMT function in the continuous setup is
\begin{equation}\label{eq:SST_recon_formula}
    \frac{1}{g(0)}\!\!\!\!\!\!\!\!\!\!\!\!\!\!\!\!\!\int\limits_{\hskip 1cm \phi_k'(t)-\epsilon^{1/3}}^{\hskip 1cm \phi_k'(t)+\epsilon^{1/3}}\!\!\!\!\!\!\!\!\!S^{(h)}_f(t,\eta)d\eta=A(t)e^{i2\pi\phi_k(t)}+O(\epsilon)
\end{equation}
By the robustness property of SST \cite{Chen_Cheng_Wu:2014}, the reconstructed IMT function is robust to the existence of noise.
Suppose we discretize the function with the sampling period $\Delta_t>0$, and discretize the frequency axis of the SST with the discretization bin $\Delta_\xi>0$. Under this setup, the reconstruction formula \eqref{eq:SST_recon_formula} for the $k$-th IMT function at time $t=l\Delta_t$ is discretized into:
\begin{equation}\label{eq:SST_recon_formula2}
\frac{1}{g(0)}\!\!\!\!\!\!\!\!\!\!\!\!\!\!\!\!\!\int\limits_{\hskip 1.2cm \phi_k'(l\Delta_t)-\epsilon^{1/3}}^{\hskip 1.2cm \phi_k'(l\Delta_t)+\epsilon^{1/3}}
\!\!\!\!\!\!\!\!\!\!\!\!\!\!\!\!\!\!\!S^{(h)}_f(l\Delta_t,\eta)d\eta\approx \frac{\Delta_\xi}{g(0)} \sum_{q\in B}S^{(h)}_f(l\Delta_t,\,q\Delta_\xi)
\end{equation}
where
\[
B=\{q; \phi_k'(l\Delta_t)-\epsilon^{1/3}\leq q\Delta_\xi \leq \phi_k'(l\Delta_t)+\epsilon^{1/3}\}.
\]
In practice, since $\epsilon$ is usually unknown, $B$ could be chosen to be
\[
B=\{q; \phi_k'(l\Delta_t)-b\leq q\Delta_\xi \leq \phi_k'(l\Delta_t)+b\},
\]
where $b>0$ is the chosen frequency range. In practice, usually we take $b$ small enough so that $\phi'_k(l\Delta_t)-b$ and $\phi'_{k-1}(l\Delta_t)$ are sufficiently separated.

\section{Numerical Examples}\label{sec:Examples}

In the following we show the performance of SIFT method compared with SST and Bandpass Filter (BPF) algorithm by means of a few numerical examples.

In all the proposed examples we intentionally remove the first and last second results values. This is because all the compared techniques have their problems handling the boundary issue. Recently, a few papers concerning the analysis of the boundary effects in a signal decomposition have been published in the literature \cite{cicone2020study,stallone2020new}. More has to be done in this direction. However, since the analysis of these effects is out of the scope of this work, we do not take it into account and postpone their analysis to future work.

We now summarize the implementation details of the following examples. The sampling rate is fixed to 100 Hz, and the noise we introduce is always the additive standard white Gaussian noise. Regarding the SST algorithm parameters setting, we consider the window $h$ to be of length $377$, which comes from discretizing the Gaussian window $\exp(-t^2/2)$ supported on $[-6,6]$ seconds into equally spaced sample points, unless differently stated. The frequency axis is discretized every 0.01Hz. In the SST reconstruction, we choose $b=0.1$Hz in \eqref{eq:SST_recon_formula2}, which implies that in the code we set the SST bandwidth to 10. For ALIF, in order to extract IMT corresponding to the instantaneous frequency curve $\sigma_l(t)$, we set the xi parameter to $1.4$. This parameter is associated with the initial tuning of a given filtering window in ALIF, and does not depend on the signal under analysis. For further details the interested reader can refer to \cite{cicone2016adaptive}.
In the SST reconstruction we set the absolute frequency band to $0.3$ Hz.

Concerning the curve extraction problem, in the following examples, for simplicity, we always make use of the known instantaneous frequency ground truth as the profile for curve extraction; that is, we search for the optimal curve around the ground truth. We postpone to future works the identification of the optimal strategy in this context, like simultaneously extracting all curves or identify the curves we have interest.

All data and codes used in this work are available for download at \url{https://github.com/Acicone/SIFT_paper}

\subsection{Example 1}
In the first example, we apply the applied SIFT to a signal which contains two nonstationary components $s_1$ and $s_2$, where the iF's do not overlap, but the ranges of the iF's overlap. We assume the signal is clean without any noise contamination. The signal, its simple components and their instantaneous frequencies are depicted in the first two rows of Figure~\ref{fig:Ex2_wNoise_signal}.

\begin{figure}[ht]
\centering
\includegraphics[width=0.9\linewidth]{./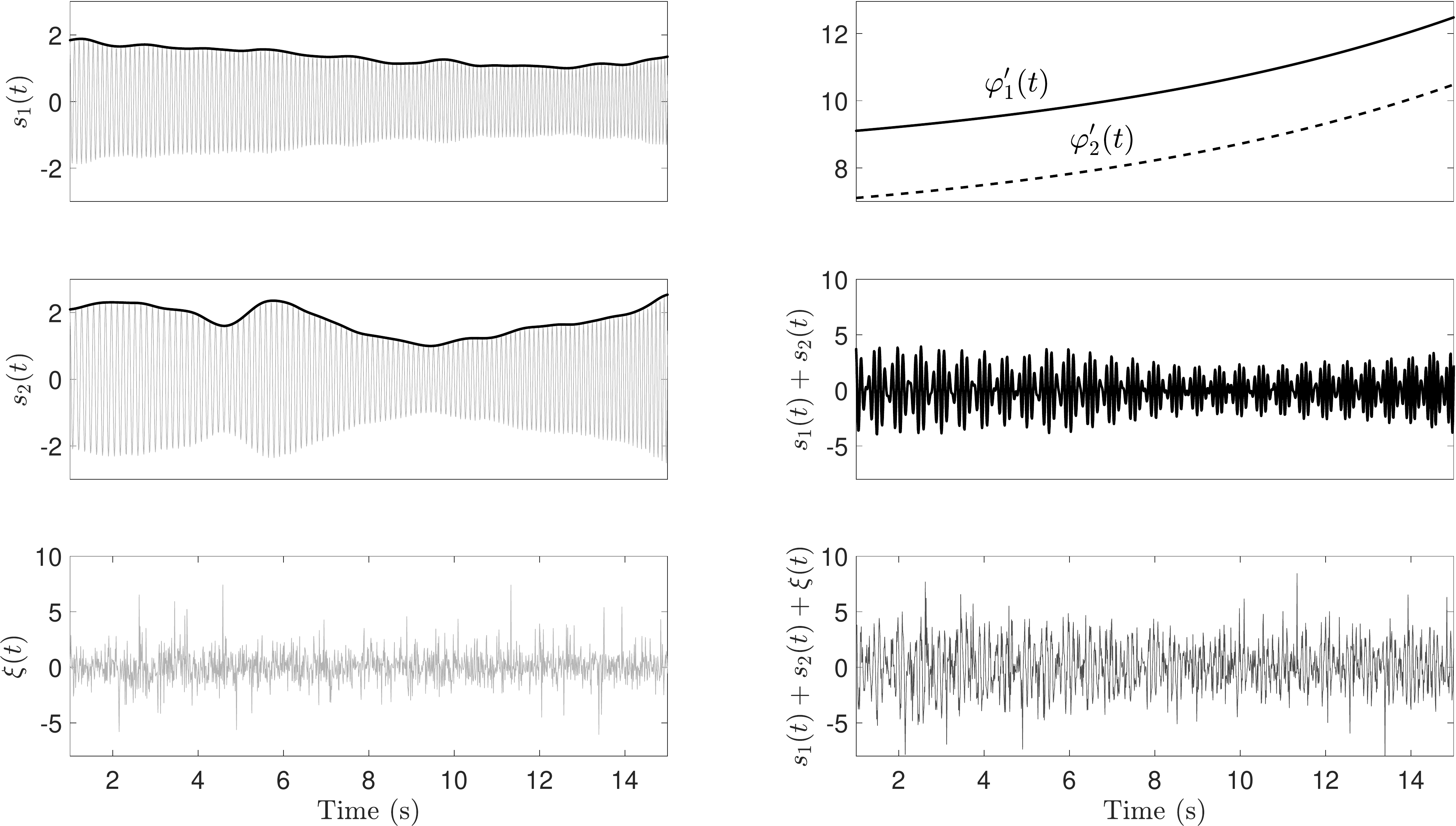}
\caption{Example 1 and 2: Left column from top to bottom: first and second simple component and noise. Right column from top to bottom: instantaneous frequencies; clean signal; signal with additive noise }
\label{fig:Ex2_wNoise_signal}
\end{figure}

In Figure \ref{fig:Ex1_noNoise_if} we show the TFR produced by the SST superimposed with the derived instantaneous frequency curves.

\begin{figure}[ht]
\centering
\includegraphics[width=0.5\linewidth]{./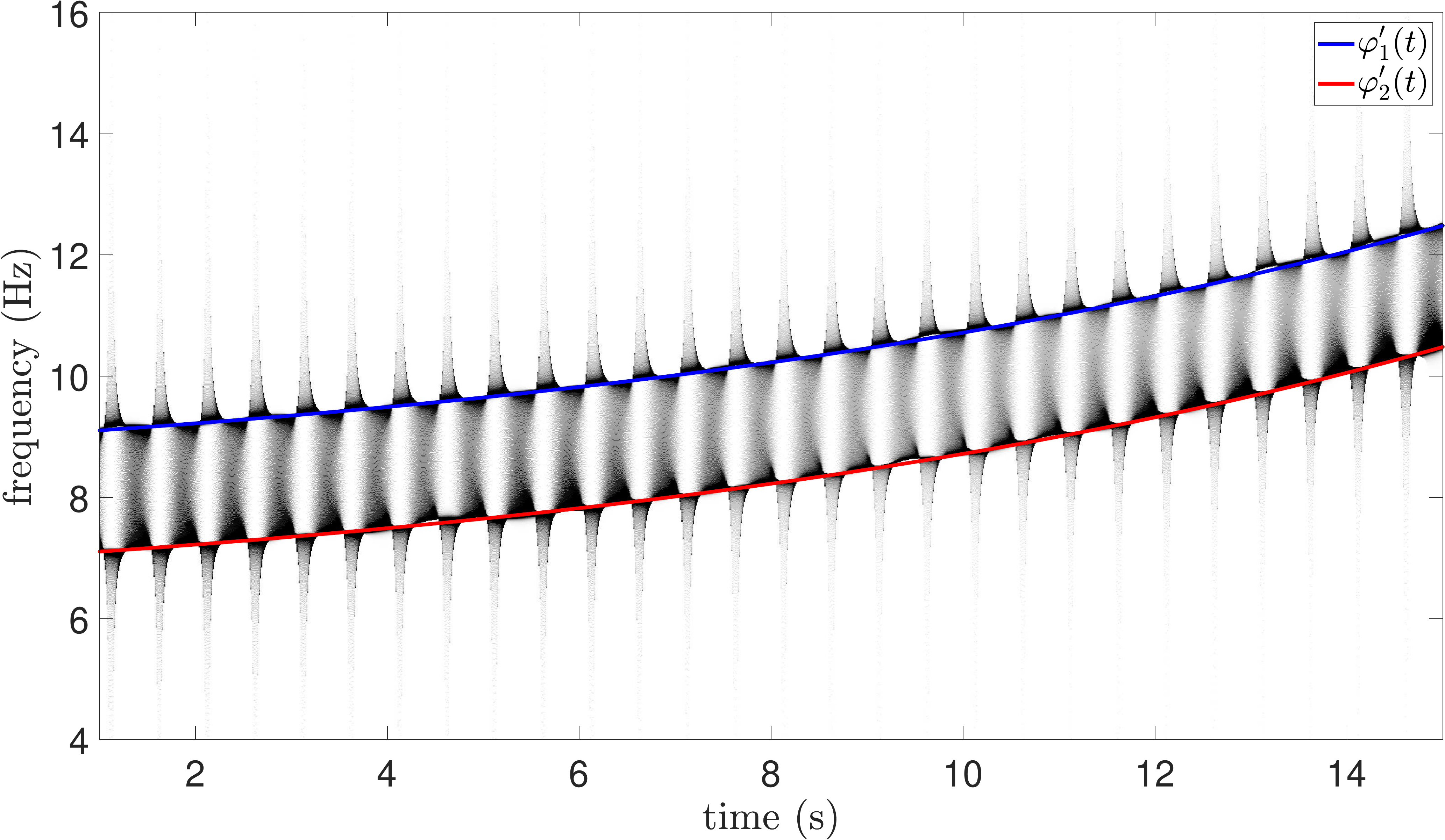}
\caption{Example 1: Instantaneous frequencies curves on top of the TFR plot }
\label{fig:Ex1_noNoise_if}
\end{figure}

In Figure \ref{fig:Ex1_noNoise_IMTs} we plot the IMTs produced by SIFT and BPF algorithms, as well as the ones obtained using the SST reconstruction formula \eqref{eq:SST_recon_formula2}. Whereas in Figure \ref{fig:Ex1_noNoise_diff}  and in Table \ref{tab:ex1} we report the differences between the ground truth and the different IMTs. From these comparisons it is evident that the SST reconstruction without any window size tuning performs poorly. After tuning the window length to 677, the SST performance are clearly improved. The proposed SIFT algorithm, instead, allows to obtain a good performance without any tuning.

\begin{figure}[ht]
\centering
\includegraphics[width=\linewidth]{./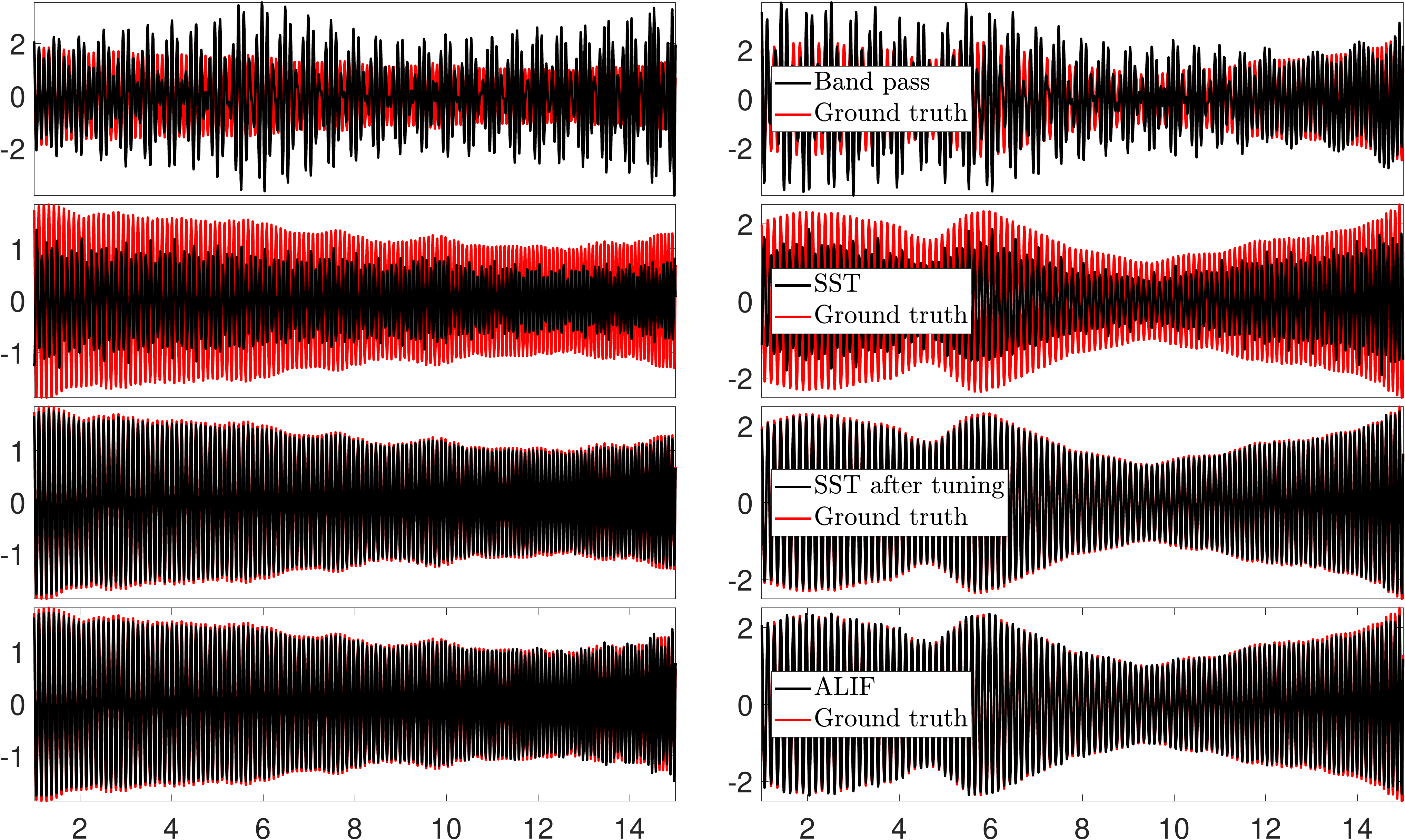}
\caption{Example 1: the ground truth and the BPF, SST, without or with window tuning, and SIFT first and second IMTs, left and right panel, respectively}
\label{fig:Ex1_noNoise_IMTs}
\end{figure}

\begin{figure}[ht]
\centering
\includegraphics[width=0.48\linewidth]{./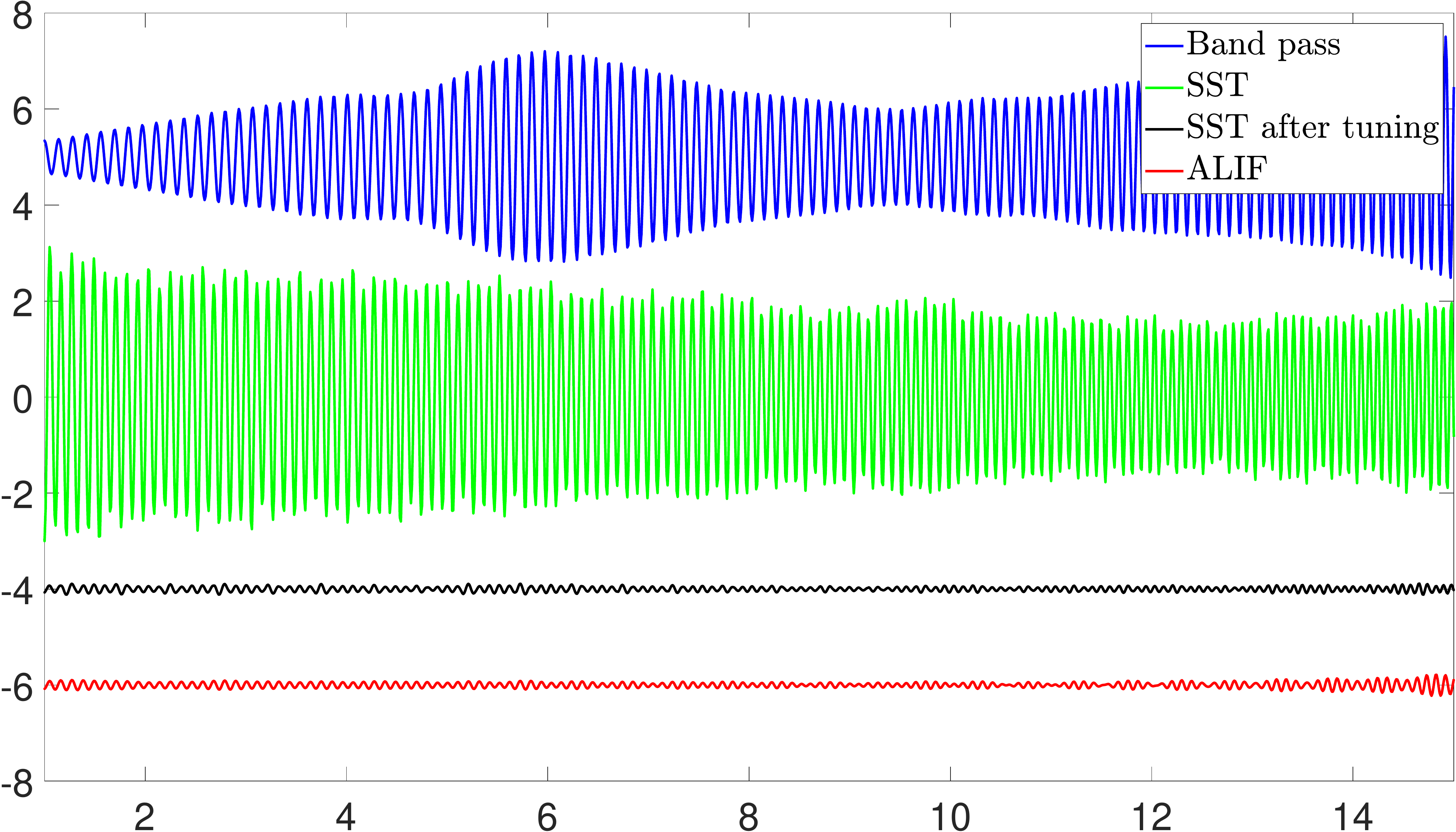}
\includegraphics[width=0.48\linewidth]{./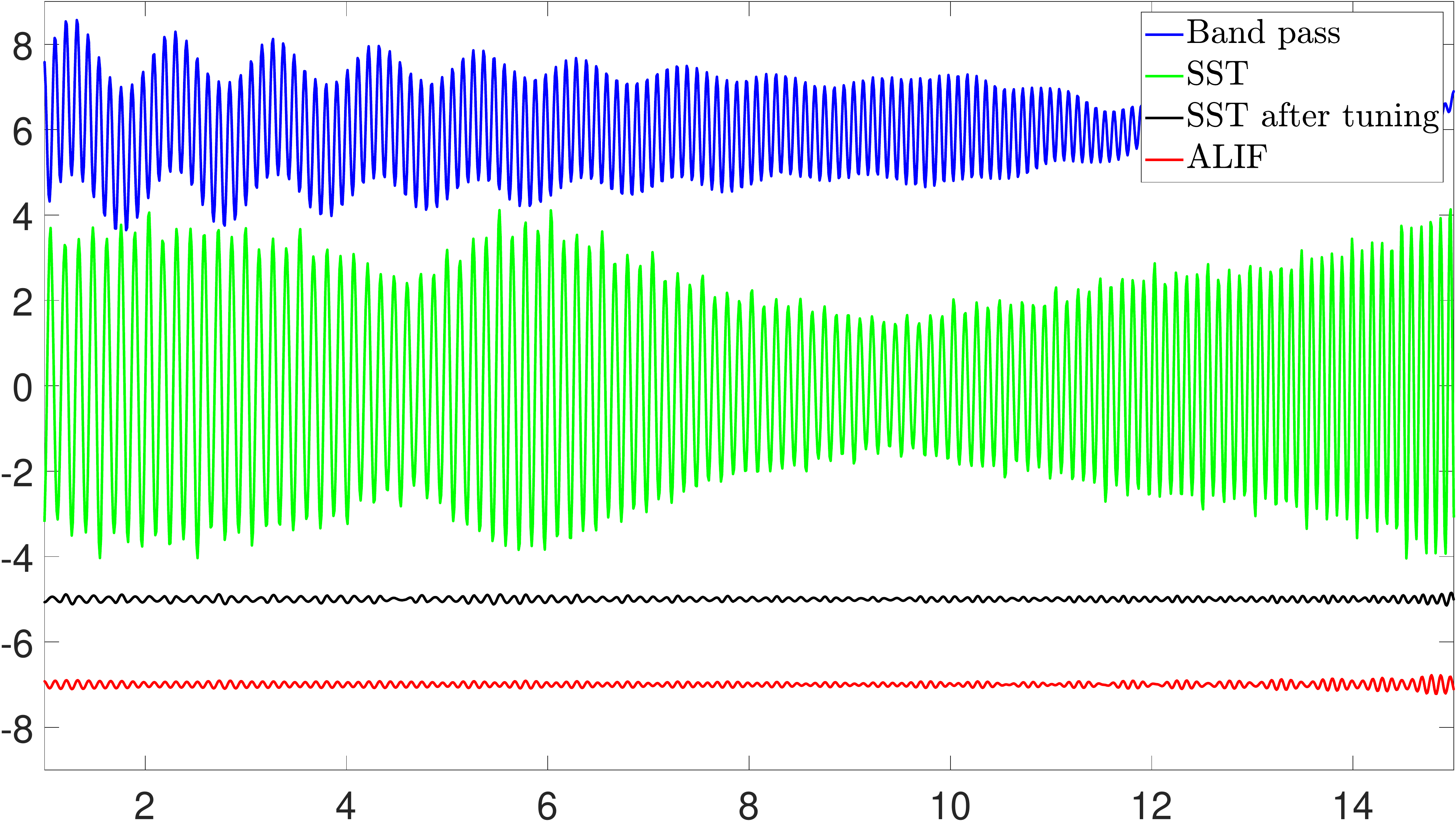}
\caption{Example 1. Differences between the ground truth and the BPF, SST, without or with a tuning, and SIFT first and second IMTs, left and right panel, respectively. We shift vertically these curves to increase their visibility}
\label{fig:Ex1_noNoise_diff}
\end{figure}

\begin{table}
  \centering
\begin{tabular}{|c|c|c|c|c|}
  \hline
  % after \\: \hline or \cline{col1-col2} \cline{col3-col4} ...
      & BPF & SST & SST after tuning  & SIFT \\
       \hline\hline
  $\textrm{IMT}_1$ & $1.0972$ & $1.3786$ & $0.1213$  & $0.2080$ \\
  \hline
  $\textrm{IMT}_2$ & $0.7664$ & $1.4780$ & $0.0939$  & $0.1562$ \\
  \hline
\end{tabular}
\caption{Example 1: Relative error in norm 2 of the different decompositions with respect to the ground truth}\label{tab:ex1}
\end{table}

%\FloatBarrier

\subsection{Example 2}

We consider now the signal of Example 1 with additive noise where the Signal to Noise Ratio expressed in decibels ($\textrm{SNR}_{\textrm{dB}}$) is  around $1.7$. This is computed as $$\textrm{SNR}_{\textrm{dB}}=20\log(\|\textrm{signal}\|_2/\|\textrm{noise}\|_2)$$
In bottom row of Figure \ref{fig:Ex2_wNoise_signal} we report the noise and the corresponding signal.

The TFR plot, Figure \ref{fig:Ex2_wNoise_if} compared with Figure \ref{fig:Ex1_noNoise_if}, it is now deteriorated by noise.

\begin{figure}[ht]
\centering
\includegraphics[width=0.5\linewidth]{./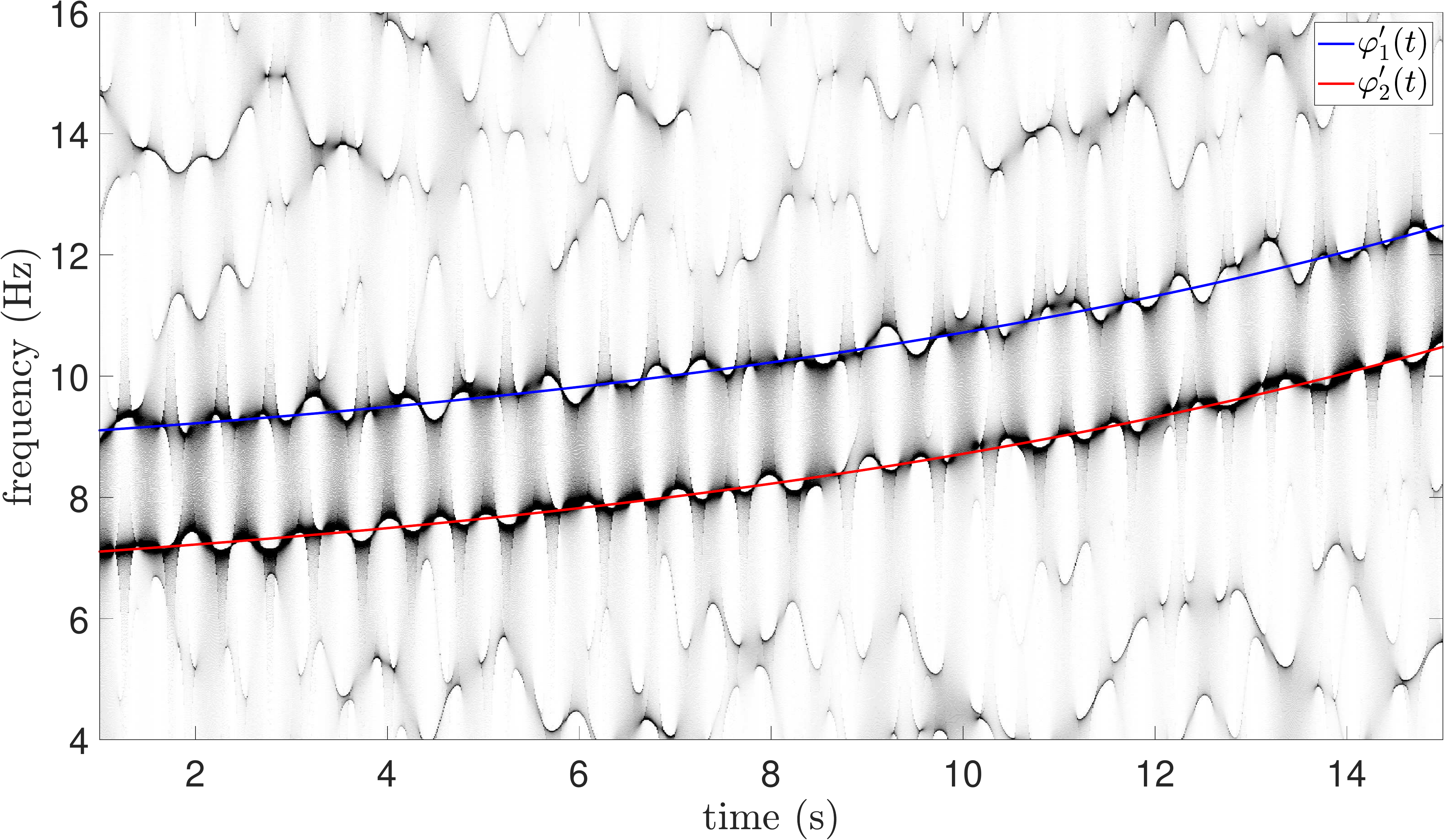}
\caption{Example 2: Instantaneous frequencies curves on top to the TFR plot }
\label{fig:Ex2_wNoise_if}
\end{figure}

As for the noiseless signal, also in this context the SST algorithm requires a window tuning, where the window length is tuned now to 677, in order to achieve good performance, whereas SIFT does not require it, Figures \ref{fig:Ex2_wNoise_IMTs} and \ref{fig:Ex2_wNoise_diff} and Table \ref{tab:ex2}. To better test the performance of SIFT with respect to BPF and SST methods, we consider 100 different noise realizations which we add to the original noiseless signal. The average errors, measured in norm 2, and their standard deviations are reported in Table \ref{tab:ex2}.

\begin{figure}[ht]
\centering
\includegraphics[width=\linewidth]{./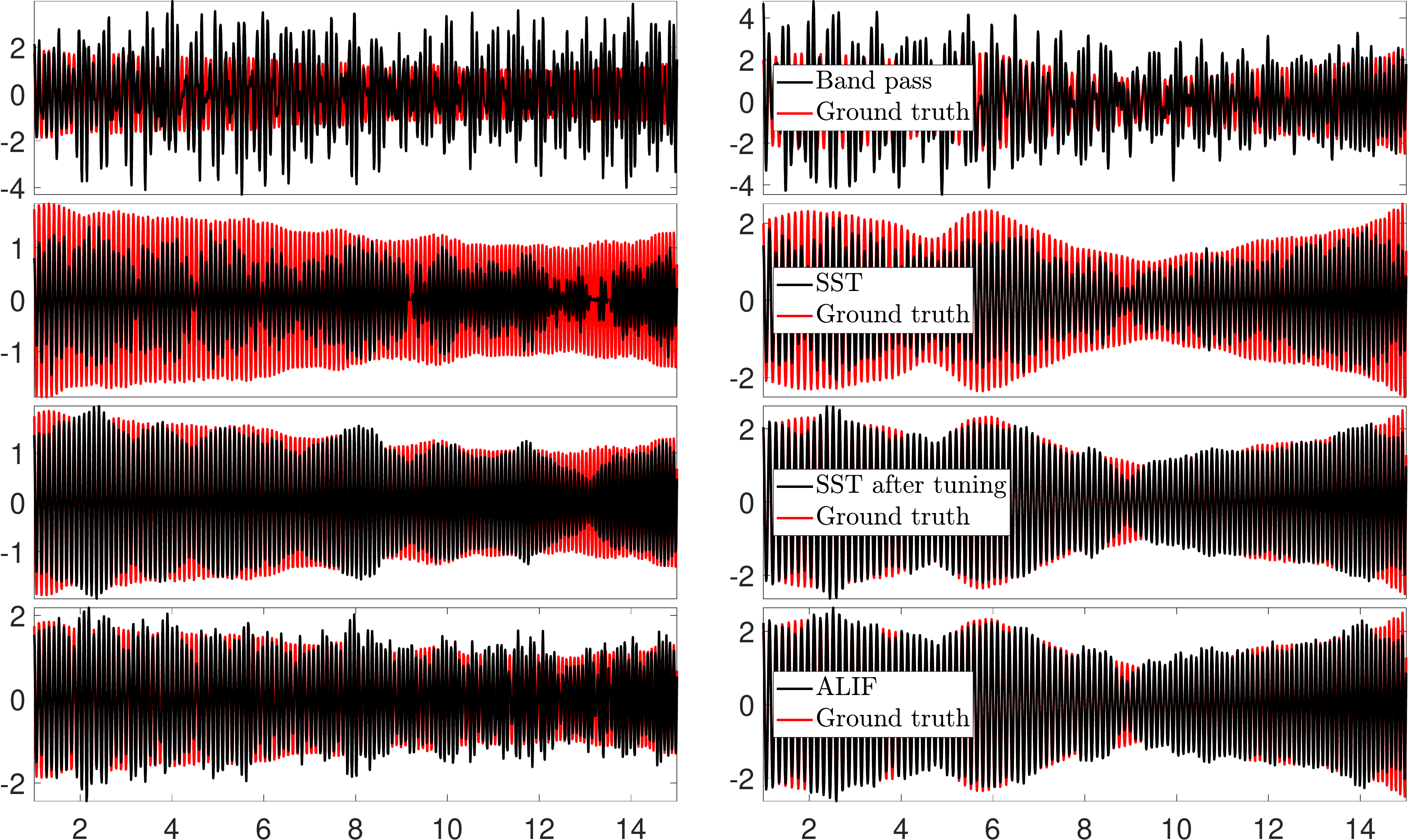}
\caption{Example 2: the ground truth and the BPF, SST, without or with window tuning, and SIFT first and second IMTs, left and right panel, respectively}
\label{fig:Ex2_wNoise_IMTs}
\end{figure}

\begin{figure}[ht]
\centering
\includegraphics[width=0.48\linewidth]{./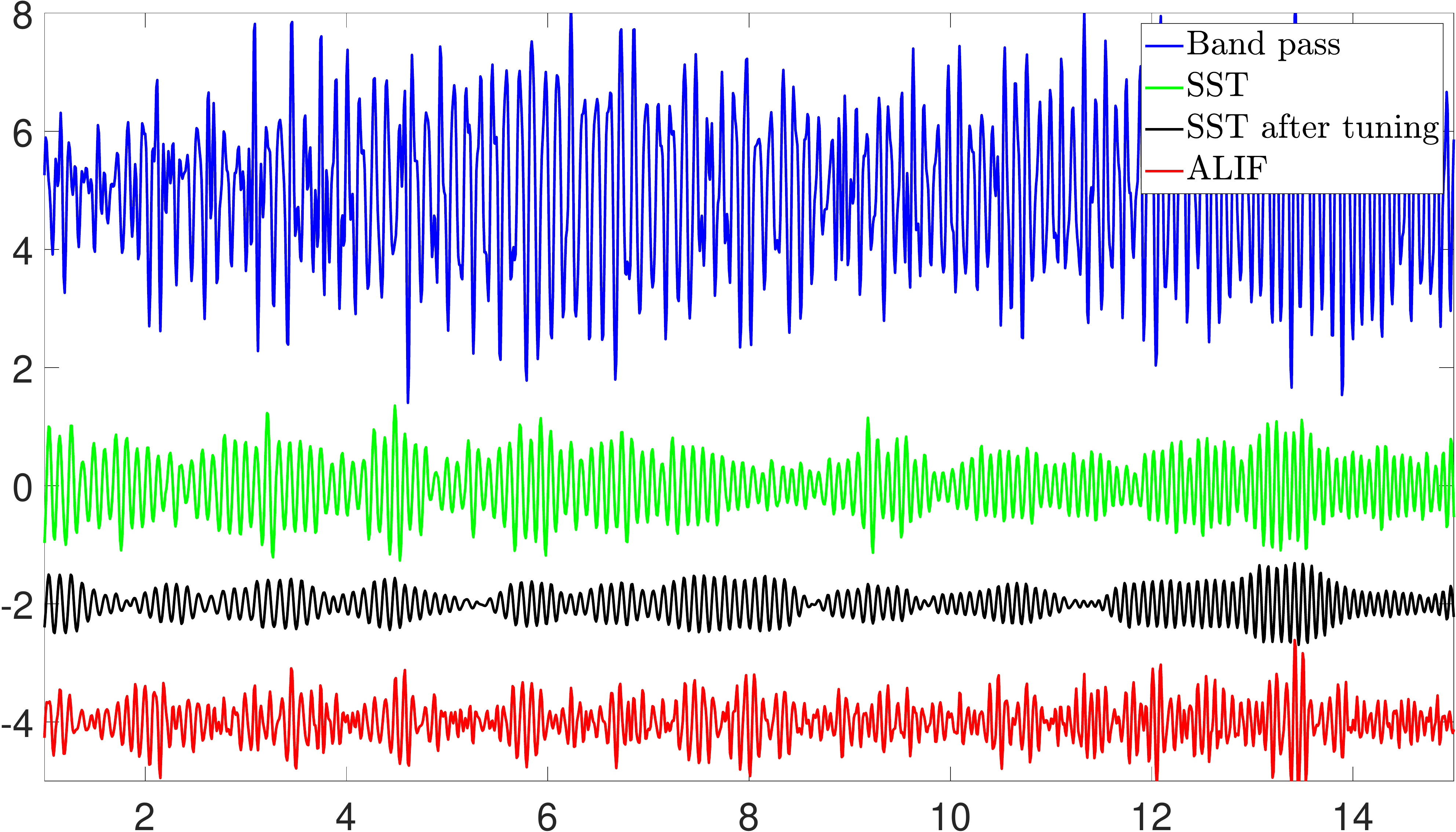}
\includegraphics[width=0.48\linewidth]{./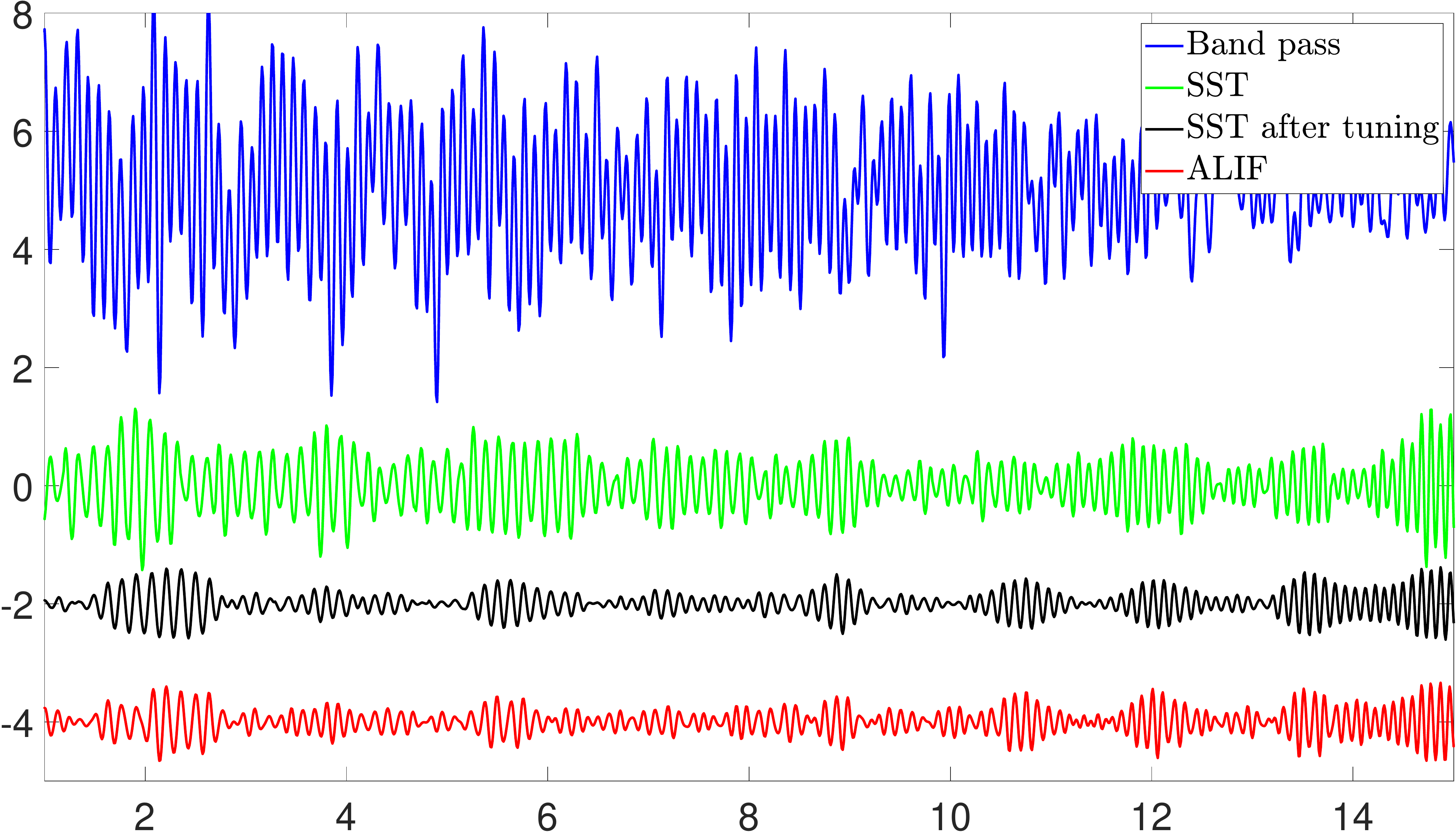}
\caption{Example 2: Differences between the ground truth and the BPF, SST, without or with a tuning, and SIFT first and second IMTs, left and right panel, respectively. We shift vertically these curves to increase their visibility}
\label{fig:Ex2_wNoise_diff}
\end{figure}

\begin{table}
  \centering
\begin{tabular}{|c|c|c|c|c|}
  \hline
  % after \\: \hline or \cline{col1-col2} \cline{col3-col4} ...
      & BPF & SST & SST after tuning  & SIFT \\
       \hline\hline
  $\textrm{IMT}_1$ & $0.6501$ $(0.0197)$ & $0.3342$ $(0.0157)$ & $0.1594$ $(0.0131)$  & $0.2378$ $(0.0127)$ \\
  \hline
  $\textrm{IMT}_2$ & $1.4348$ $(0.0379)$ & $0.4856$ $(0.0273)$ & $0.2195$ $(0.0228)$  & $0.2337$ $(0.0211)$ \\
  \hline
\end{tabular}
\caption{Example 2: Mean and standard deviation, respectively outside and inside parentheses, of the relative errors in norm 2 of the different decompositions with respect to the ground truth for 100 of different noise realizations}\label{tab:ex2}
\end{table}

%\FloatBarrier

\subsection{Example 3}

In this example we consider another nonstationary signal whose components have non-overlapping iF's, but the ranges of the iF's overlap. This feature creates problem in the signal decomposition with standard methods like Short Time Fourier and wavelet Transform, as well as Bandpass filter.
In Figure \ref{fig:Ex4_signal} we show the simple components $s_1$ and $s_2$, their instantaneous frequencies and one realization of the additive noise. The signal shown in the bottom row of that figure as $\textrm{SNR}_{\textrm{dB}}~=~1.93$.

\begin{figure}[ht]
\centering
\includegraphics[width=0.9\linewidth]{./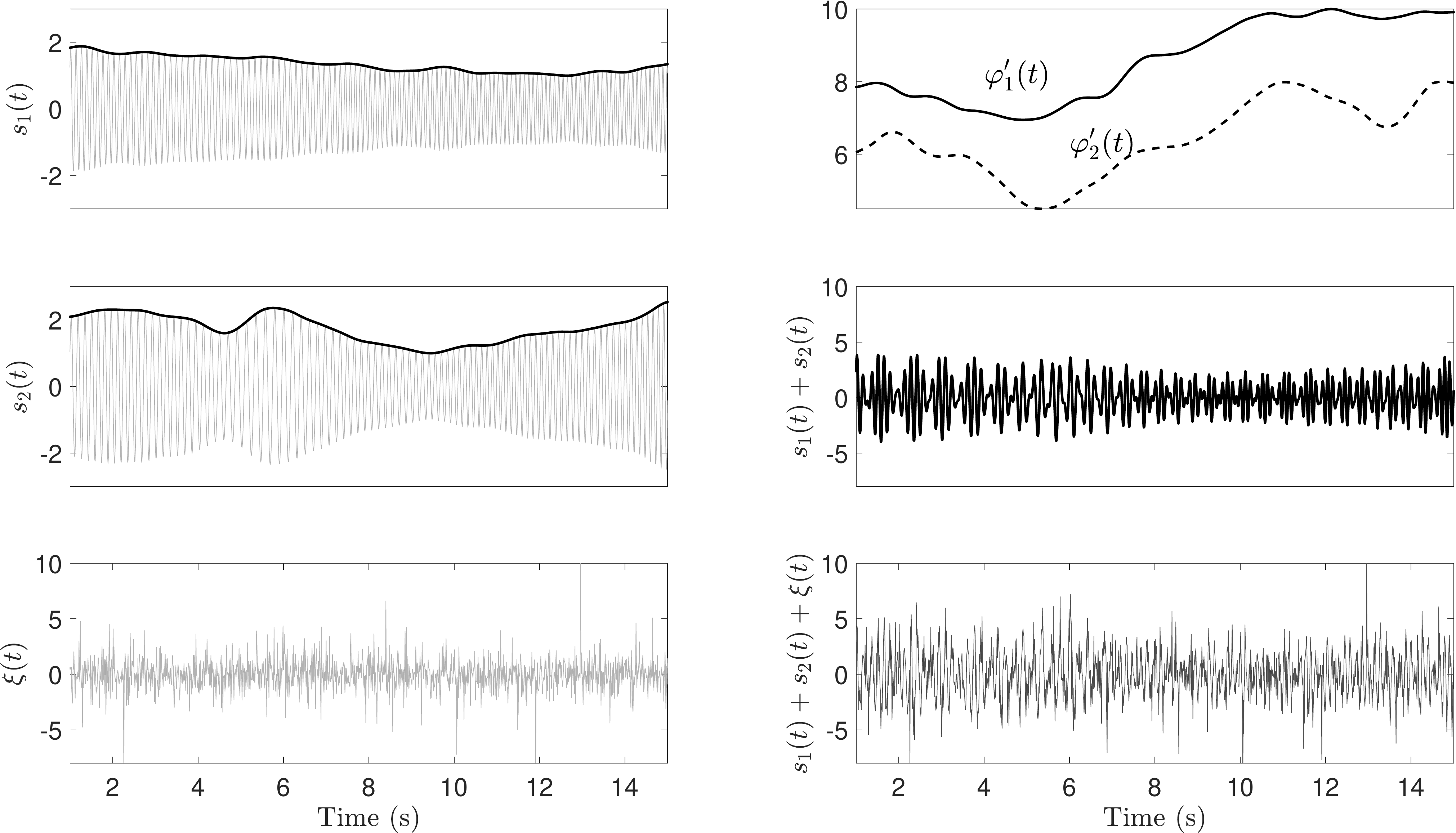}
\caption{Example 3 signal. Left column from top to bottom: first and second simple component and noise. Right column from top to bottom: instantaneous frequencies; clean signal; signal with additive noise}
\label{fig:Ex4_signal}
\end{figure}

Using SIFT approach we obtain the TFR and the instantaneous frequency curves shown in the left panel of Figure \ref{fig:Ex4_if}.

In Figures \ref{fig:Ex4_IMTs} and \ref{fig:Ex4_diff} we plot the IMTs produced using different approaches and their differences with respect to the ground truth, when applied to the signal shown in Figure \ref{fig:Ex4_signal}. The tuned SST window length is equal to a Gaussian window supported on $[-6,6]$ second discretized into $677$ equally spaced sample points.

\begin{figure}[ht]
\centering
\includegraphics[width=\linewidth]{./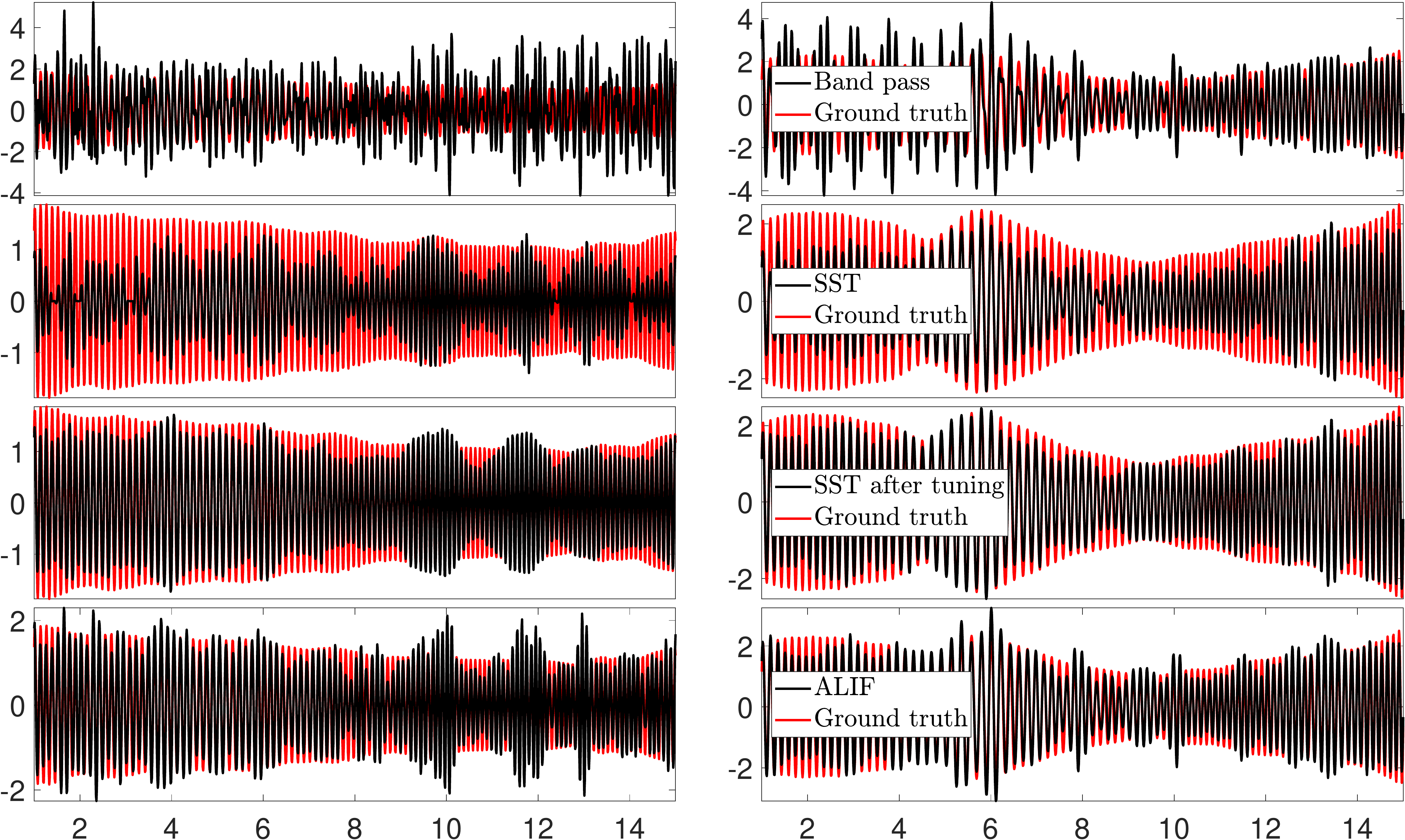}
\caption{Example 3: the ground truth and the BPF, SST, without or with window tuning, and SIFT first and second IMTs, left and right panel, respectively}
\label{fig:Ex4_IMTs}
\end{figure}

\begin{figure}[ht]
\centering
\includegraphics[width=0.48\linewidth]{./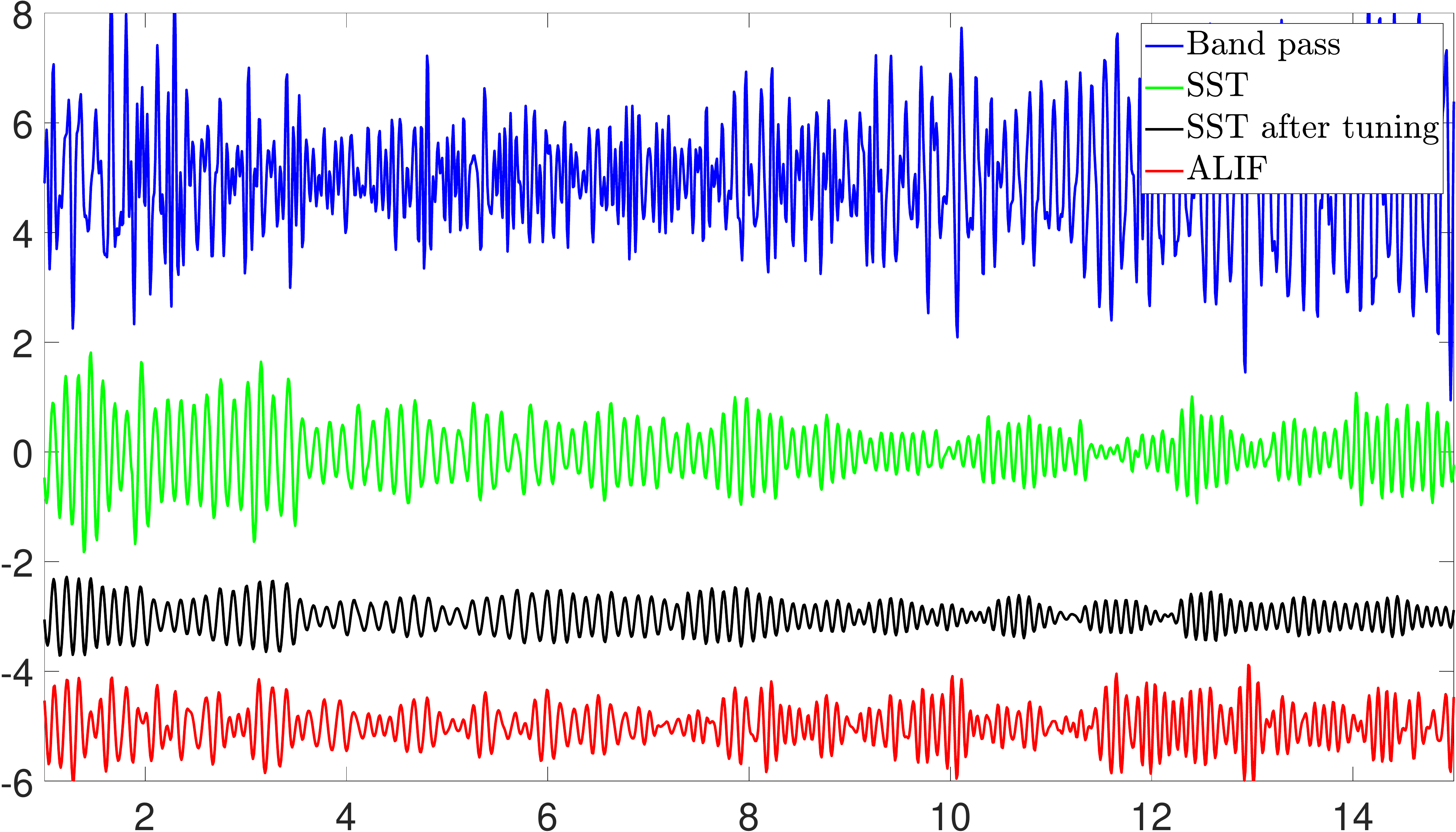}
\includegraphics[width=0.48\linewidth]{./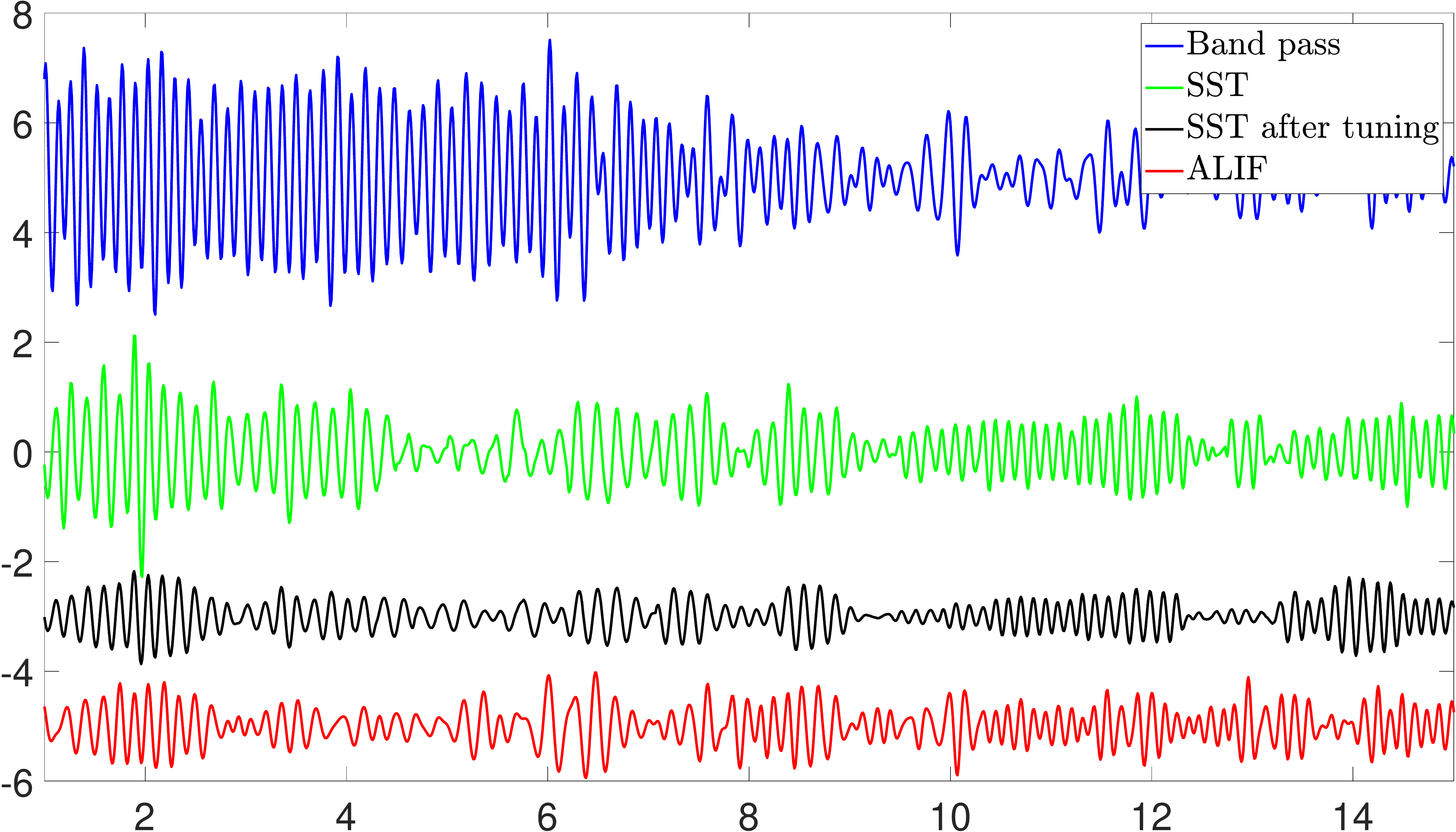}
\caption{Example 3: Differences between the ground truth and the BPF, SST, without or with a tuning, and SIFT first and second IMTs, left and right panel, respectively. We shift vertically these curves to increase their visibility}
\label{fig:Ex4_diff}
\end{figure}

In Figure \ref{fig:Ex4_if} right panel, we show the TFR produced by SST, and the proposed SIFT-TFR. Clearly, we see that the iF's represented in the SIFT-TFR are crispier, with less wiggle, and more concentrated around the ideal iF values. Moreover, the noisy pattern in the background is less dominant, and the spectral leakage before 4-th second in the TFR determined by SST is less dominate.
\begin{figure}[ht]
\centering
\includegraphics[width=0.48\linewidth]{./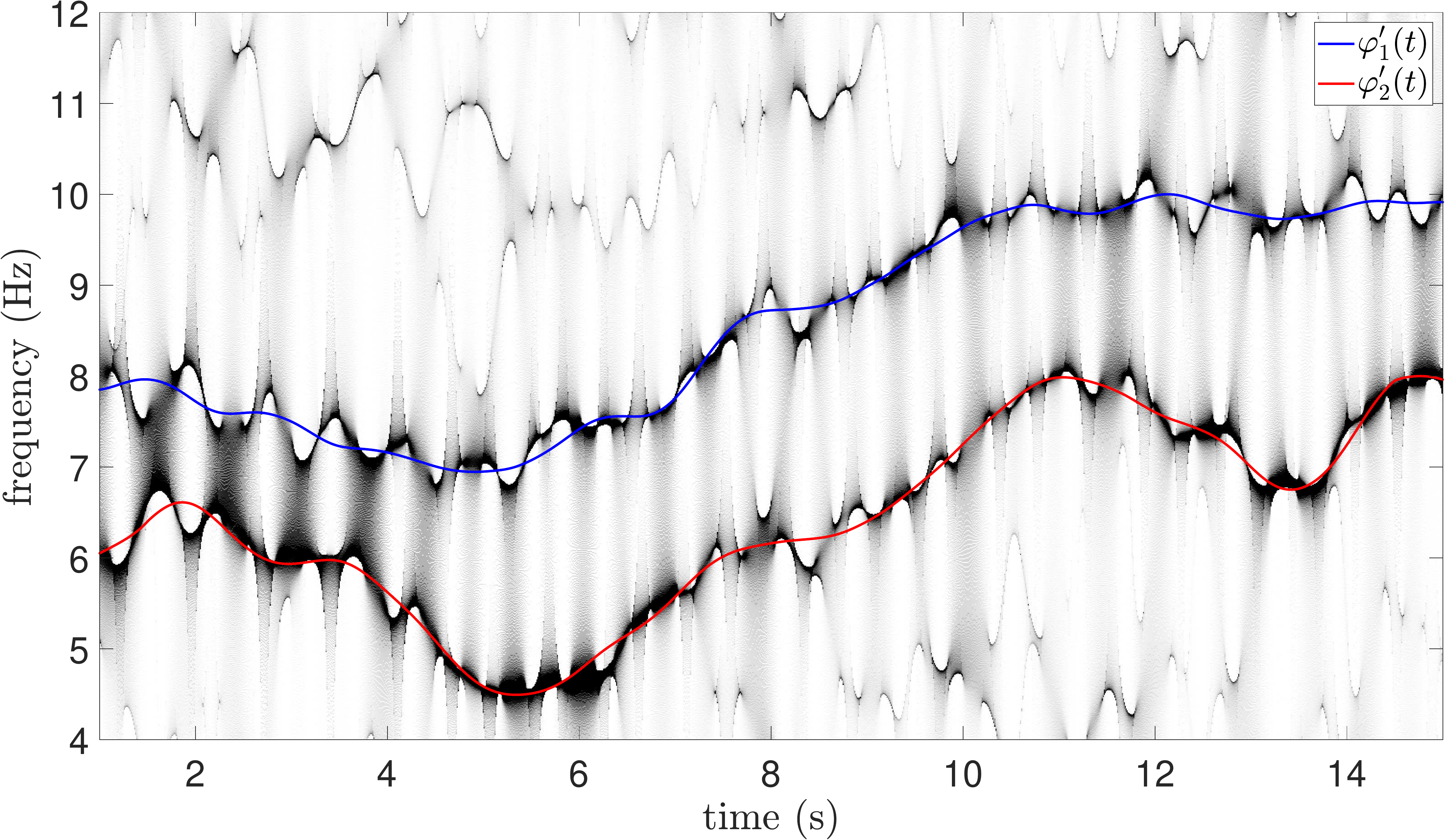}
\includegraphics[width=0.48\linewidth]{./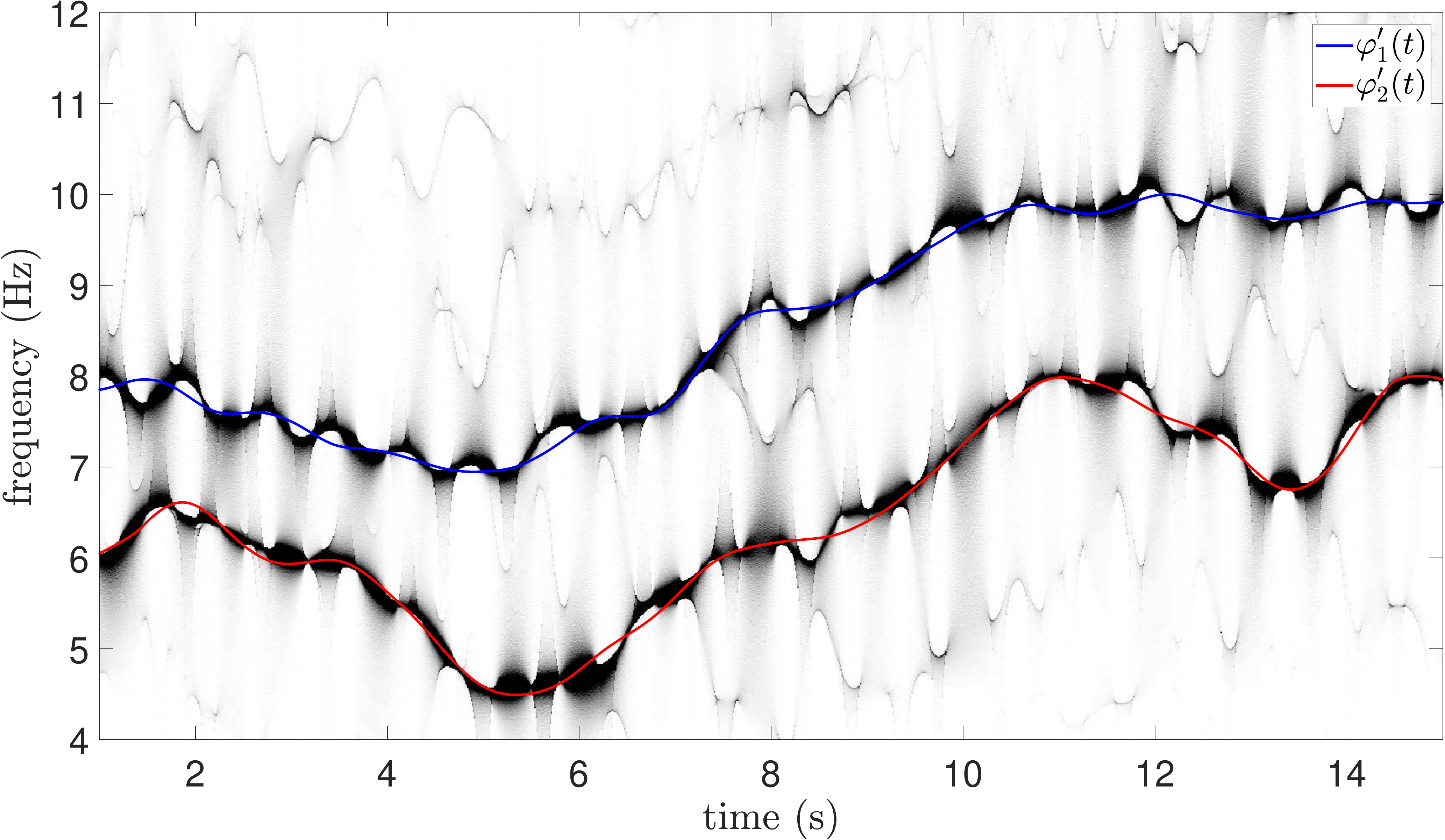}
\caption{Example 3: Left panel, instantaneous frequencies curves on top of the TFR plot of the original signal. Right panel, TFR of the SIFT IMTs}
\label{fig:Ex4_if}
\end{figure}

In Table \ref{tab:Ex4} we report the mean and standard deviation values of the norm 2 differences between the ground truth and the IMTs produced using SIFT, BPF ad SST methods when applied to the signal presented in Figure \ref{fig:Ex4_signal} with 100 different noise realizations. Also in this case SIFT outperform both BPF and SST without a window tuning.

\begin{table}
  \centering
\begin{tabular}{|c|c|c|c|c|}
  \hline
  % after \\: \hline or \cline{col1-col2} \cline{col3-col4} ...
      & BPF & SST & SST after tuning  & SIFT \\
       \hline\hline
  $\textrm{IMT}_1$ & $0.6501$ $(0.0197)$ & $0.3342$ $(0.0157)$ & $0.1594$ $(0.0131)$  & $0.2378$ $(0.0127)$ \\
  \hline
  $\textrm{IMT}_2$ & $1.4348$ $(0.0379)$ & $0.4856$ $(0.0273)$ & $0.2195$ $(0.0228)$  & $0.2337$ $(0.0211)$ \\
  \hline
\end{tabular}
\caption{Example 3:  Mean and standard deviation, respectively outside and inside parentheses, of the relative errors in norm 2 of the different decompositions with respect to the ground truth for 100 of different noise realizations}\label{tab:Ex4}
\end{table}

%\FloatBarrier

\subsection{Example 4}

In this example to test the ability of SIFT method in separating two components with crossing instantaneous frequencies. We show the signal under analysis in Figure \ref{fig:Ex3_signal} included one realization of noise. The shown signal as $\textrm{SNR}_{\textrm{dB}}~=~3.13$.

\begin{figure}[ht]
\centering
\includegraphics[width=0.9\linewidth]{./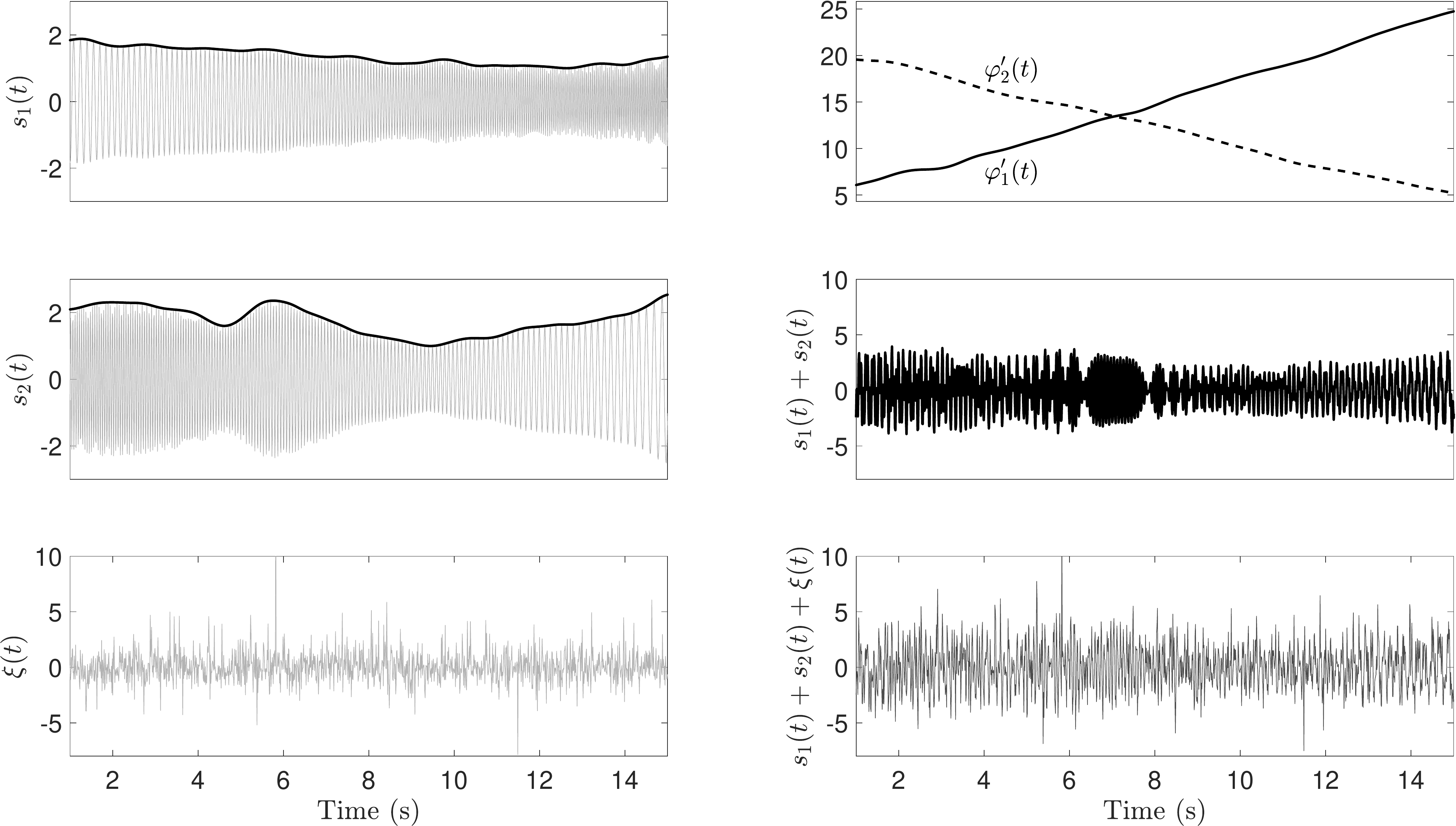}
\caption{Example 4: Left column from top to bottom: first and second simple component and noise. Right column from top to bottom: instantaneous frequencies; clean signal; signal with additive noise. $\textrm{SNR}_{\textrm{dB}} = 3.13$ }
\label{fig:Ex3_signal}
\end{figure}

\begin{figure}[ht]
\centering
\includegraphics[width=0.48\linewidth]{./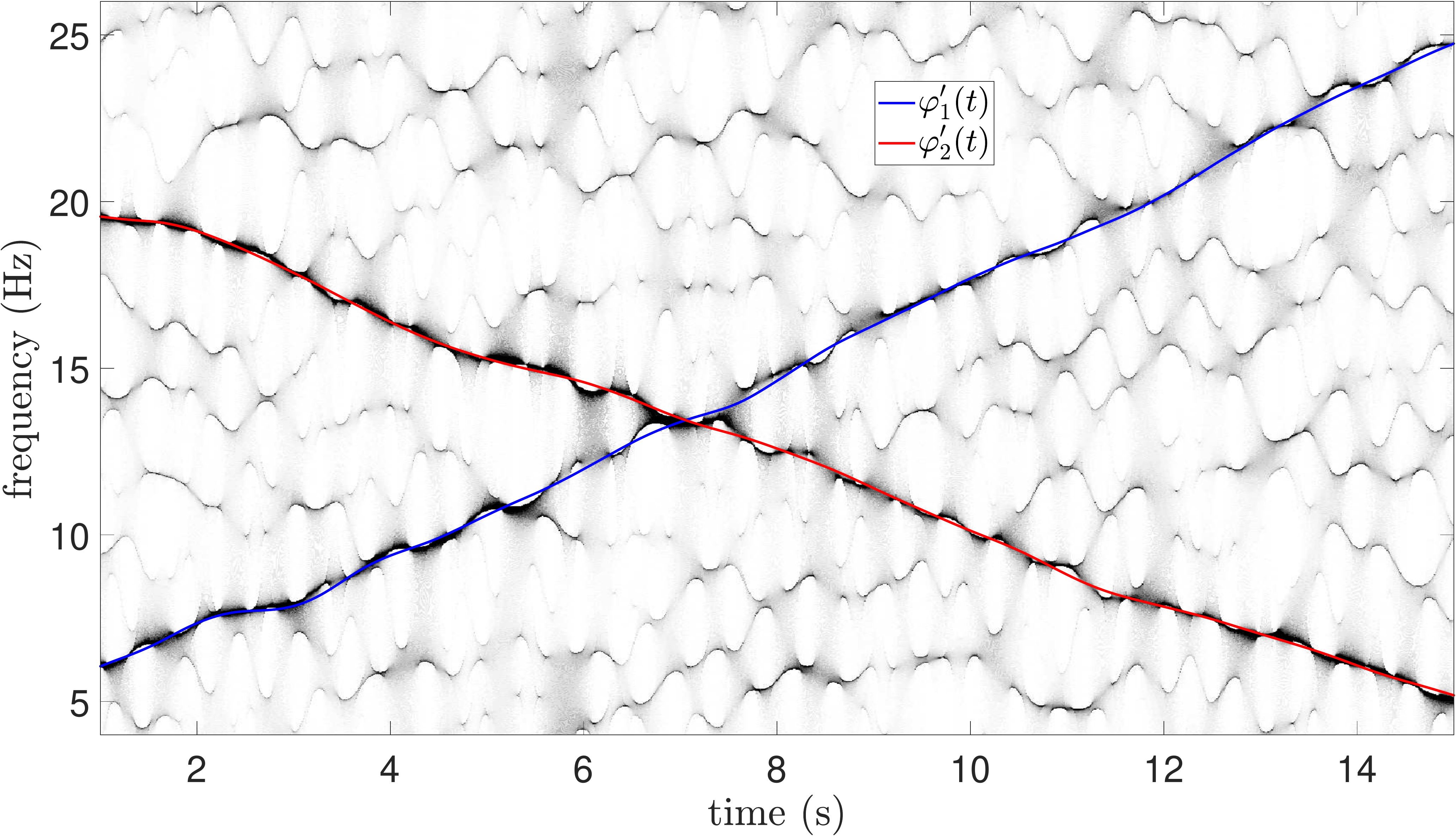}
\includegraphics[width=0.48\linewidth]{./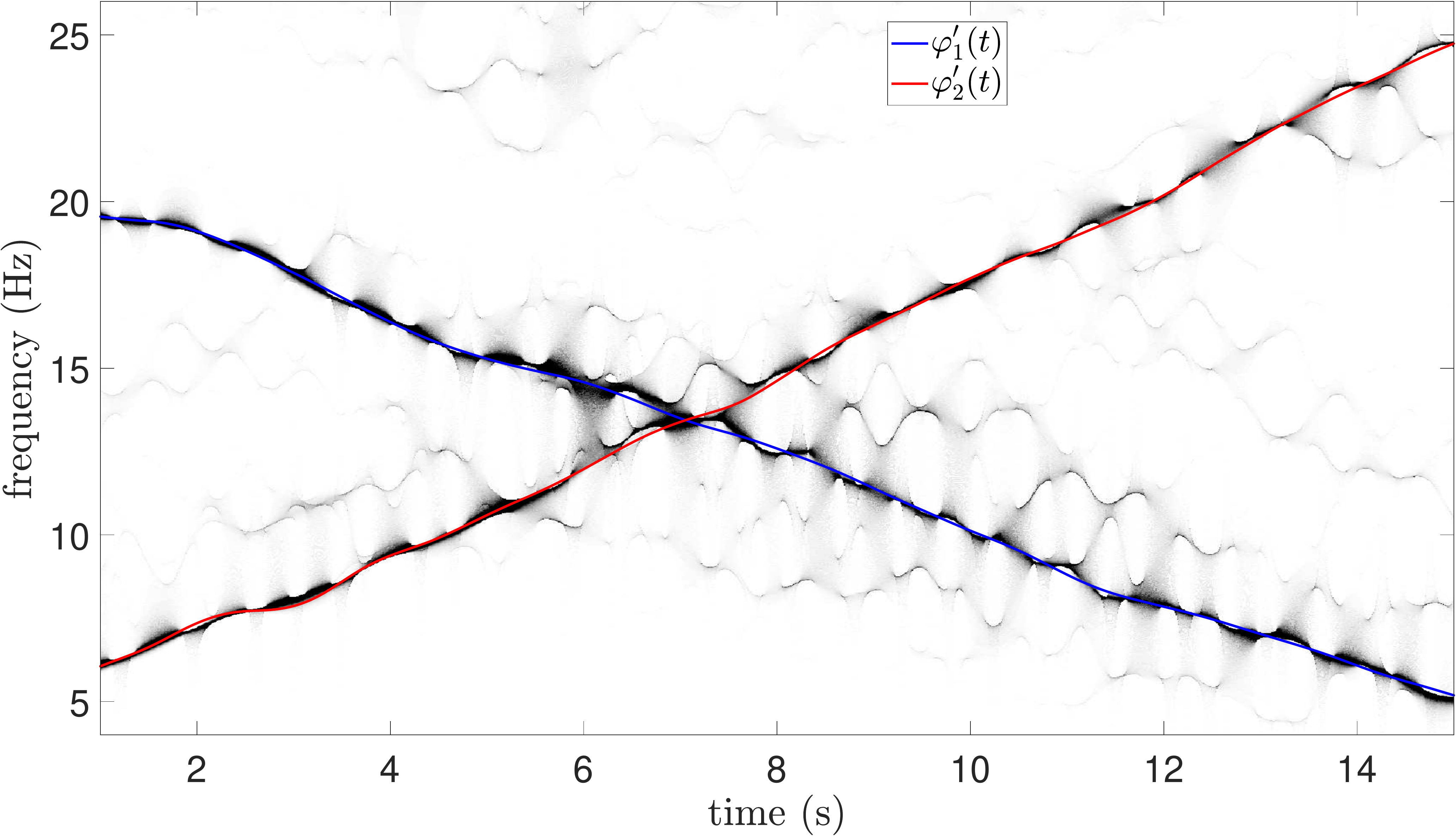}

\caption{Example 4: Left panel, instantaneous frequencies curves on top of the TFR plot of the original signal. Right panel, TFR of the SIFT IMTs}
\label{fig:Ex3_if}
\end{figure}

In order to test SIFT performance in extracting the two components, we consider 100 different realizations of noise. The mean relative differences, measured in norm 2, between the ground truth and the IMTs produced by SIFT out of 100 noise realizations are $0.5677$, with standard deviation of $0.0290$, and $0.4191$, with standard deviation of $0.0191$, for the first and second IMT respectively.

\begin{figure}[ht]
\centering
\includegraphics[width=0.48\linewidth]{./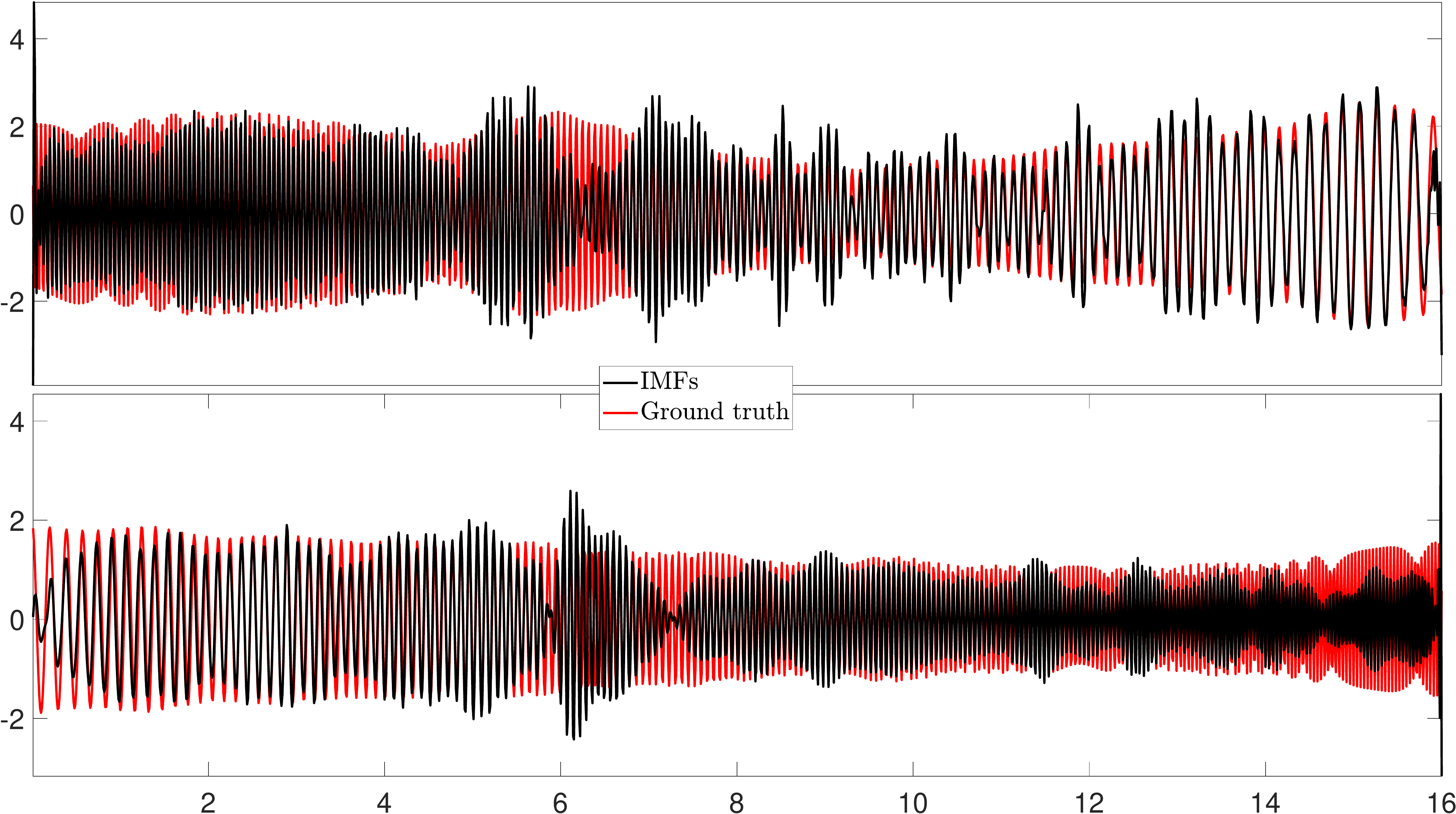}
\caption{Example 4: SIFT decomposition compared with the ground truth}
\label{fig:Ex3_IMTs}
\end{figure}
In Figure \ref{fig:Ex3_if} right panel, we show the TFR produced by SST, and the proposed SIFT-TFR. Also in this case we see that the iF's represented in the SIFT-TFR are crispier, with less wiggle, and more concentrated around the ideal iF values. The noisy pattern in the background is drastically reduced.

%\FloatBarrier

\section{Discussion and Conclusion}\label{sec:Conclusions}

In this work we study the convergence of the Adaptive Local Iterative Filtering (ALIF) method and we propose a new approach for the decomposition and analysis of highly nonstationary signals called Synchrosqueezing Iterative Filtering Technique (SIFT), which leverages on the ALIF and SST features, to produce a stable, convergent and robust method which allows to reconstruct the single highly nonstationary components contained in a signal.

Regarding the ALIF convergence, we were able to prove that the method converges and extracts an IMT from a given signal.  We plan to extend in the future the results presented in this work to the case of the general operator given in \eqref{eq:K_gen}. Furthermore in this paper we do not analyze how ALIF behaves when the signal is noisy. Indeed, suppose the input signal $f$ is contaminated by a Gaussian white noise $\Phi$, what is $\mathcal{S}_\sigma^{(K)}(f+\Phi)$ as a random process, and how is it related to $\mathcal{S}_\sigma^{(K)}f$? This kind of statistical problem, particularly for the nonlinear-type time-frequency approach, was only partially studied, except some recent efforts \cite{sourisseau2019}. We will explore this problem in a future work, since it is the key step toward the statistical inference of a signal features.

Concerning the SIFT algorithm, we showed numerically that it performs better than other signal decomposition algorithms, including SST and BPF reconstruction, particularly when the signal is contaminated by noise, in the following sense.
Note that from the results shown in the Numerical Examples section, it is evident that SIFT performance is not better than {\em fine tuned} windows for SST. However, when the window is not well chosen for SST, the reconstruction result is worse than that of SIFT. Nevertheless, SIFT does not depend on any window selection. This window-free property is the main benefit of SIFT. On the other hand, the SIFT-TFR has a higher quality compared with the TFR generated by SST. Indeed, when we apply the SST to a summation of various IMT functions, the spectral content of different IMT functions might ``talk to each other'' if the window is not selected properly. However, how to choose a suitable window is not an easy task. One approach is designing the window width locally based on the Reyni entropy \cite{sheu2017entropy}. However, it does not fully relieve this spectral leakage issue. In SIFT-TFR, the IMT functions are first decomposed iteratively starting from that with the highest frequency by ALIF, so the TFR of each estimated IMT function is less impacted by this spectral leakage issue.

Regarding the optimal boundary handling which minimize the errors induced in the decompositions, this is, to the best of our knowledge, an open problem in the field. Recently a few papers on this topic have been published in the literature \cite{cicone2020study,stallone2020new}. However, more has to be done in this direction. We plan to study this problem in a future work.

Another open problem that needs to be addressed is the identification of the presence of oscillatory components inside a signal under study, in order to decide if applying a decomposition technique like SIFT or not. How to identify an oscillatory component satisfying the IMT function model for the statistical inference purpose is very limited discussed in the literature, except \cite{sourisseau2019}. Even if we can state with confidence that there is an IMT function in the signal, it is common in practice that the oscillatory component may not always exist. How to detect when an oscillatory component disappears or appears is usually understood as the change point detection problem. To the best of our knowledge, this kind of change point detection problem for the oscillatory component was never studied in the past, except some recent efforts \cite{Tommy2020}. We will explore more in this statistical inference direction in our future work.

\section*{Acknowledgements}

Antonio Cicone is a member of the Italian ``Gruppo Nazionale di Calcolo Scientifico'' (GNCS).

\bibliography{Biblio_v3}
\bibliographystyle{plain}

\end{document}